\newcommand{\textcyr}[1]{%
 {\fontencoding{OT2}\fontfamily{wncyr}\fontseries{m}\fontshape{n}\selectfont #1}}
\newcommand{\Sha}{{\mbox{\textcyr{Sh}}}}
\newcommand{\Z}{{\mathbb Z}}
\newcommand{\Q}{{\mathbb Q}}
\newcommand{\R}{{\mathbb R}}
\newcommand{\F}{{\mathbb F}}
\newcommand{\CO}{{\mathcal O}}
\newcommand{\Fp}{{\mathfrak p}}
\newcommand{\To}{\longrightarrow}
\newcommand{\Hom}{\operatorname{Hom}}
\newcommand{\Sel}{\operatorname{Sel}}
\newcommand{\Reg}{\operatorname{Reg}}
\newcommand{\eps}{\varepsilon}
\newcommand{\Cl}{\operatorname{Cl}}
\newcommand{\im}{\operatorname{im}}
\newcommand{\ord}{\operatorname{ord}}
\newcommand{\BSD}{\operatorname{BSD}}
\newcommand{\tors}{{\operatorname{tors}}}
\newcommand{\an}{{\operatorname{an}}}
\newtheorem{Theorem}{Theorem}[section]
\newtheorem{Lemma}[Theorem]{Lemma}
\newtheorem{Corollary}[Theorem]{Corollary}
\theoremstyle{definition}
\newtheorem{Definition}[Theorem]{Definition}
\newtheorem{Example}[Theorem]{Example}
\theoremstyle{remark}
\newtheorem{Remark}[Theorem]{Remark}
\numberwithin{equation}{section}
\begin{document}

\title[Isogeny descent]{Explicit Isogeny Descent on Elliptic Curves}


\author{Robert L.~Miller}
\address{Quid, Inc., 733 Front Street, C1A, San Francisco, CA 94111, USA}
\email{rmiller@quid.com}

\author{Michael Stoll}
\address{Mathematisches Institut, Universit\"at Bayreuth, 95440
  Bayreuth, Germany}
\email{Michael.Stoll@uni-bayreuth.de}

\subjclass[2010]{Primary 11G05, Secondary 14G05, 14G25, 14H52}

\date{July 28, 2011}


\begin{abstract}
  In this note, we consider an $\ell$-isogeny descent on a pair of elliptic
  curves over~$\Q$. We assume that $\ell > 3$ is a prime. The main result
  expresses the relevant Selmer groups as kernels of simple explicit maps
  between finite-dimensional $\F_\ell$-vector spaces defined in terms of
  the splitting fields of the kernels of the two isogenies.
  We give examples of proving the $\ell$-part of the Birch and
  Swinnerton-Dyer conjectural formula for certain curves of small
  conductor.
\end{abstract}

\maketitle



\section{Introduction}

Let $E/\Q$ be an elliptic curve. Then it is known~\cite{Mordell} that
the group $E(\Q)$ of rational points on~$E$ is a finitely generated
abelian group. Its finite torsion subgroup is easily determined, but so
far there is no method known that can provably determine the rank~$r$
of~$E(\Q)$ for an arbitrary given curve~$E$.
There is another abelian group associated to~$E/\Q$, the
\emph{Shafarevich-Tate group} $\Sha(\Q, E)$. It is conjectured to be
finite for all elliptic curves; however, this is only known for curves
of analytic rank 0 or~1.

The \emph{analytic rank} is the order of vanishing of
the $L$-series $L(E,s)$ associated to~$E$ at the point $s = 1$. The
conjecture of Birch and Swinnerton-Dyer states that the analytic rank
equals the rank, and moreover gives a relation between the leading term
of the Taylor expansion of~$L(E,s)$ at~$s=1$ and various local and global
data associated to~$E$, including the order of~$\Sha(\Q, E)$.
Kolyvagin~\cite{Kolyvagin} has shown that the first part of the conjecture
holds when the analytic rank is at most~1, that in this case $\Sha(\Q, E)$
is finite, and the second part of the conjecture holds up to a rational
factor involving only primes in a certain finite set (depending on~$E$).

The two groups $E(\Q)$ and $\Sha(\Q, E)$ are related by objects that can
(in principle) be computed: for each $\ell \ge 1$, there is a finite
computable group $\Sel^{(\ell)}(\Q, E)$, the \emph{$\ell$-Selmer group}
of~$E$, and an exact sequence of abelian groups
\[ 0 \To E(\Q)/\ell E(\Q) \To \Sel^{(\ell)}(\Q, E) \To \Sha(\Q, E)[\ell] \To 0 \,. \]
If $\ell$ is a prime number, then all groups involved are $\F_{\ell}$-vector
spaces, and we obtain the relation
\[ \dim_{\F_{\ell}} \Sel^{(\ell)}(\Q, E)
     = r + \dim_{\F_{\ell}} E(\Q)[\ell] + \dim_{\F_{\ell}} \Sha(\Q, E)[\ell] \,.
\]
This can be used to obtain upper bounds for~$r$ on the one hand, but
also leads to information on~$\Sha(\Q, E)$ when $r$ is known (for example
when the analytic rank is at most~$1$). In particular, if
\[ \dim_{\F_{\ell}} \Sel^{(\ell)}(\Q, E) = r + \dim_{\F_{\ell}} E(\Q)[\ell] \,, \]
then it follows that the $\ell$-primary part of~$\Sha(\Q, E)$ is trivial.
The computation of the $\ell$-Selmer group is referred to as an
\emph{$\ell$-descent on~$E$}. How this can be done is discussed in some
detail in~\cite{SchSt}. The computation involves obtaining information
on class groups and unit groups in number fields of degree up to~$\ell^2-1$,
so this often will be infeasible when $\ell \ge 5$. Even when there is
a rational $\ell$-isogeny $\phi : E \to E'$, one usually has to deal with a field
of degree~$\ell^2 - \ell$. In this case, an alternative approach is to
compute Selmer groups associated to~$\phi$ and the dual isogeny~$\phi'$.
The information obtained still provides upper bounds for the rank~$r$
and the $\ell$-torsion of~$\Sha(\Q, E)$ (and $\Sha(\Q, E')$), but the
latter may fail to be sharp.

There is already a considerable amount of work in the literature in
specific cases. In the following, we try to give an overview, which we
do not claim to be exhaustive.
Cremona uses 2-isogeny descents in \cite{Crem} for a
large number of curves, to determine the ranks of the Mordell-Weil
groups $E(\Q)$. In addition, online notes~\cite{CremonaOnline}
describe how to extend these descents to full 2-descents, in cases where
the information gained is inconclusive. Frey~\cite{Frey84} uses
2-isogeny descent for curves of the form $y^2 = x^3 \pm p^3$ for primes
$p > 3$ to determine their ranks in terms of congruence conditions on $p$.
The general theory of 2-isogeny descents is presented in detail in
\cite[Chapter~X]{Silverman}.

Selmer~\cite{Selmer51,Selmer54} and later Cassels~\cite{Cassels59}
studied cubic twists of the cubic Fermat curve
and considered among other things
3-isogenies and the multiplication by $\sqrt{-3}$ map for these curves.
Satg\'e~\cite{PSatge} considers the 3-isogeny
from the curve given by $y^2 = x^3 + A$ to its twist $y^2 = x^3 - 27A$,
for arbitrary $A$. He determines the Selmer group of this isogeny over $\Q$
by identifying it with a certain subgroup of
$\Hom(G_{\Q\left(\sqrt{A}\right)}, \Z/3\Z)$. Jeechul Woo in his Ph.D. thesis
\cite{JWoo} works out the theory and formulae for 3-isogeny descent in the
presence of a rational 3-torsion point. Nekov\'a\v{r} considers quadratic
twists of the Fermat curve in \cite{Nekovar} and computes the Selmer groups
of rational 3-isogenies. Quer~\cite{JQuer} uses the connection between
3-isogeny Selmer groups and class groups of quadratic fields to exhibit
quadratic imaginary fields of 3-rank~6, based on
elliptic curves of the form $y^2 = x^3 + k$ with rank~12. Top~\cite{JTop}
demonstrates that the technique Quer uses applies to any elliptic curve
admitting a rational 3-isogeny. DeLong~\cite{DeLong} finds a formula for
the dimension of the Selmer groups of these 3-isogenies which also relates
the 3-ranks of the associated quadratic fields. Elkies and Rogers
\cite{ElkRog} use explicit formulas for 3-isogeny descents to construct
elliptic curves of the form $x^3 + y^3 = k$ of ranks 8 through~11
over $\Q$.

Flynn and Grattoni~\cite{FlynnGrattoni} consider
isogenies coming from rational points of prime power order and exhibit an
element of the Shafarevich-Tate group of order~13. Their PARI programs are
available at~\cite{FGprog}.

Beaver computes Selmer groups for isogenies of degree~5 in~\cite{Beaver}
and uses an explicit formula for the Cassels-Tate pairing to find nontrivial
elements of the Shafarevich-Tate group of order~5. Fisher gives general results
regarding descents over rational isogenies of degree $\ell = 5$ and $\ell = 7$
when one of the curves has a rational $\ell$-torsion point
in~\cite{TFisher, TFisherPap}, which also include tables of many specific cases.

Cremona, Fisher, O'Neil, Simon and Stoll~\cite{Cremetal}
work out the general theory for full $n$-descents. Schaefer relates in
\cite{Schaefer} Selmer groups for isogenies of abelian varieties over number
fields to class groups.
In~\cite{SchaeferDescmap} he realizes the connecting homomorphism in
Galois cohomology as evaluation of a certain function called the descent map
(see Section~\ref{gen} below) on divisors.
With Stoll he explains in~\cite{SchSt} how to do a descent
for any isogeny of odd prime power degree~$\ell^e$. One important result, which is also
used here, is that the set of bad primes can be reduced to those
above $\ell$ and those with the property that one of the corresponding
Tamagawa numbers is divisible by~$\ell$.

In this note, we will expand on~\cite{SchSt} and show how such an
$\ell$-isogeny descent can be performed with only little explicit
computation. Our main result is given in Theorem~\ref{thm}. It expresses
the relevant Selmer groups as kernels of simple explicit maps between
finite-dimensional $\F_\ell$ vector spaces. The maps and spaces are
defined in terms of the splitting fields of the kernels of the two
isogenies involved. See Section~\ref{Idea} below for an explanation of
the underlying idea. Our result makes the computation of the Selmer group
sizes very easy and straight-forward. This can be used to obtain bounds
on the rank and/or the size of the $\ell$-torsion subgroup of the
Shafarevich-Tate groups of the two curves involved. The result takes
a particularly simple form when the kernel of~$\phi$ is generated
by a rational point, see Corollary~\ref{pt}, which reproduces one of
the main results of Fisher's thesis~\cite{TFisher}.

We have used the results to finish off the verification of the full
Birch and Swinner\-ton-Dyer conjecture for a number of elliptic curves
of conductor up to~5000. For a more precise statement of this result, see
Theorem~\ref{BSD}.

We thank the anonymous referees for some useful comments.


\section{Generalities}\label{gen}

Let $\ell > 3$ be a prime, and let $E$ be an elliptic curve over~$\Q$ such
that $E$ has a rational $\ell$-isogeny. By Mazur's famous result~\cite{Mazur},
this implies that
\[ \ell \in \{5, 7, 11, 13, 17, 19, 37, 43, 67, 163\} \,. \]
We remark that everything we do in
this paper still works for $\ell = 3$, under the condition that
$E$ and~$E'$ have no special fibers of type IV or~$\text{IV}^*$. For simplicity,
we do not discuss this case in more detail. Note that a full 3-descent
as described in~\cite{Cremetal} is usually feasible (and an implementation
is available in {\sf MAGMA}, for example), so for practical purposes,
it is of particular interest to be able to deal with the case $\ell > 3$.

Let $\phi : E \to E'$ be
the isogeny, and let $\phi' : E' \to E$ be the dual isogeny. V\'elu~\cite{Velu}
gives explicit formulae for $\phi$ and $E'$ in terms of $E$. (Note that the model
given for $E'$ may not be minimal.) Our reference for the following will
be~\cite{SchSt}.

We have an exact sequence of Galois modules
\begin{equation} \label{KumSeq}
  0 \To E'[\phi'] \To E' \stackrel{\phi'}{\To} E \To 0 \,.
\end{equation}
The \emph{$\phi'$-Selmer group} $\Sel^{(\phi')}(\Q, E')$ sits in
the Galois cohomology group $H^1(\Q, E'[\phi'])$; it is defined to be
the kernel of the diagonal map in the following diagram whose (exact)
rows are obtained by taking Galois cohomology of~\eqref{KumSeq} over~$\Q$
and over all completions~$\Q_v$, respectively.
\[ \xymatrix{ E(\Q) \ar[r]^-{\delta} \ar[d]
               & H^1(\Q, E'[\phi']) \ar[r] \ar[d] \ar[rd]
               & H^1(\Q, E') \ar[d] \\
              \prod_v E(\Q_v) \ar[r]^-{\delta}
               & \prod_v H^1(\Q_v, E'[\phi']) \ar[r]
               & \prod_v H^1(\Q_v, E')
            }
\]
The \emph{Shafarevich-Tate group} $\Sha(\Q, E')$ is the kernel of the
right-most vertical map in the diagram. This leads to the exact sequence
\[ 0 \To E(\Q)/\phi'(E'(\Q)) \stackrel{\delta}{\To} \Sel^{(\phi')}(\Q, E')
     \To \Sha(\Q, E')[\phi'] \To 0 \,.
\]

By the usual yoga (see~\cite[p.~1222]{SchSt}), we find that
\[ H^1(\Q, E'[\phi']) \cong \left(K^\times/(K^\times)^\ell\right)^{(1)} \,, \]
where $K$ is the field of definition of any nontrivial point~$P$ in
the kernel~$E[\phi]$, and the superscript~$(1)$ denotes the subgroup
on which the automorphism induced by $P \mapsto aP$ acts as $z \mapsto z^a$
(for $a \in \F_\ell^\times$ such that $aP$ is in the same Galois orbit
as~$P$). If $S$ is a finite set of primes, we define\footnote{There is a
variant of this definition that
requires $K(\sqrt[\ell]{\alpha})/K$ to be unramified outside primes above
primes in~$S$. This does not make a difference when $\ell \in S$. For our
purposes, the definition given here is more convenient. Note that it will
be used later with sets~$S$ possibly not containing~$\ell$.}
\begin{align*}
  K(S, \ell) &= \{\alpha (K^\times)^\ell
                   : \text{$\ell \mid v_{\Fp}(\alpha)$ for all primes $\Fp$
                           of~$K$ not above some $p \in S$}\} \\
             &\subset \frac{K^\times}{(K^\times)^\ell} \,.
\end{align*}
Then the image of the Selmer group is contained in $K(S, \ell)^{(1)}$
if $S$ contains $\ell$ and the primes~$p$
such that $\ell$ divides one of the Tamagawa numbers $c_p(E)$ or~$c_p(E')$
(see~\cite[Prop.~3.2]{SchSt}).
Note that if $E(\Q)[\phi] \neq 0$, then we have that
$K = \Q$ and $K(S, \ell)^{(1)} = \Q(S, \ell)$.

We fix our notations by requiring that $E[\phi] \subset E(\R)$ and
$E'[\phi'] \cap E'(\R) = 0$. This puts a definite order on the pair $(E, E')$.

Let $F \in K(E)$ be the \emph{descent map}, \emph{i.e.,} $F$ has a zero of order~$\ell$
at~$P$ and a pole of order~$\ell$ at the origin~$O$ of~$E$, and is normalized
such that in terms of the local parameter $t = y/x$ at~$O$ (we fix
a globally minimal Weierstrass equation for~$E$), we have
\[ F(t) = t^{-\ell}(1 + tf(t)) \,, \]
for some power series $f(t)$ over $K$. Then the connecting homomorphism~$\delta$
in the sequence above can be identified with
\[ F : E(\Q) \To K(S, \ell)^{(1)} \,, \]
where we set $F(O) = 1$ and $F(P) = 1/F(-P)$ (if $P \in E(\Q)$).
We let $K_p = K \otimes_{\Q} \Q_p$, then there is a canonical homomorphism
$r_p : K^\times/(K^\times)^\ell \to K_p^\times/(K_p^\times)^\ell$.
The function~$F$ induces a map $F_p : E(\Q_p) \to K_p^\times/(K_p^\times)^\ell$.

We then have
\begin{equation} \label{SelEq}
   \Sel^{(\phi')}(\Q, E')
     = \{\xi \in K(S,\ell)^{(1)}
         : r_p(\xi) \in F_p(E(\Q_p)) \text{\ for all $p \in S$}\}
\end{equation}
as a subgroup of~$K(S, \ell)^{(1)}$, where $S$ is as before.

In the following, we want to make the expression on the right hand side
of Equation~\eqref{SelEq} as explicit as possible.


\section{The basic idea} \label{Idea}

We begin with a definition that is needed below.

\begin{Definition} \label{DefStar}
  For $S$ a finite set of primes and $K$ a number field, let
  \[ K(S, \ell)^*
      = \prod_{p \in S} \frac{\CO_{K,p}^\times}{(\CO_{K,p}^\times)^\ell} \,,
  \]
  where $\CO_{K,p} = \CO_K \otimes_{\Z} \Z_p$ is the $p$-adic completion
  of the ring of integers of~$K$. If $S$ and~$S'$ are finite disjoint sets of primes,
  then there is an obvious canonical map
  \[ K(S, \ell) \To K(S', \ell)^* \,. \]
  (Note that for $p \notin S$, an element of $K(S, \ell)$ always has a
  representative that is a $p$-adic unit.)
\end{Definition}

Let $S$ be the set of primes~$p$ such that $\ell$ divides one of the
Tamagawa numbers $c_p(E)$ or~$c_p(E')$, together with the prime~$\ell$.
According to equation~\eqref{SelEq} above, we need to find the subgroup
of $K(S, \ell)^{(1)}$ consisting of elements satisfying certain local
conditions at the primes $p \in S$. This will be made fairly easy if these
local conditions are of a simple nature. The simplest possible cases
certainly occur when the `local image' $F_p(E(\Q_p))$ is either trivial
or the full local group $H_p = \bigl(K_p^\times/(K_p^\times)\bigr)^{(1)}$.
Another easy situation is when the local image is exactly the part~$U_p$ of the
local group that comes from $p$-adic units, since in that case, we can
just drop~$p$ from~$S$.

\begin{Lemma} \label{LemmaGen}
  We now assume that for each $p \in S$, we are in one of the three cases
  mentioned above, and we set
  \[ S_1 = \{p \in S : F_p(E(\Q_p)) = H_p\} \quad\text{and}\quad
     S_2 = \{p \in S : F_p(E(\Q_p)) = 0\} \,.
  \]
  Then
  \[ \Sel^{(\phi')}(\Q, E')
      = \ker\bigl(\alpha : K(S_1, \ell)^{(1)} \to K(S_2, \ell)^*\bigr) \,.
  \]
  Here $\alpha$ is the canonical map from Definition~\ref{DefStar}.
\end{Lemma}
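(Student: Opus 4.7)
The plan is to start from the description of $\Sel^{(\phi')}(\Q, E')$ in Equation~\eqref{SelEq} and show that, under the assumed trichotomy of local images, the local conditions at primes in~$S$ collapse either to a ramification restriction (cutting $S$ down to~$S_1$) or to an $\ell$-th power condition, which together will be recorded by the map~$\alpha$.

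First I would partition $S = S_1 \sqcup S_2 \sqcup S_3$ by introducing
$S_3 = \{p \in S : F_p(E(\Q_p)) = U_p\}$, where $U_p \subseteq H_p$ is the subgroup represented by $p$-adic units. The central local fact I would establish is that, for $\xi \in K(S,\ell)^{(1)}$ and $p \in S$, one has $r_p(\xi) \in U_p$ if and only if $\ell \mid v_\Fp(\xi)$ for every prime $\Fp$ of~$K$ above~$p$. This follows from the decomposition $K_p^\times/(K_p^\times)^\ell = \prod_{\Fp \mid p} K_\Fp^\times/(K_\Fp^\times)^\ell$ together with the standard description of local $\ell$-th power classes via a uniformizer.

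Next I would use this observation to show $\Sel^{(\phi')}(\Q,E') \subseteq K(S_1,\ell)^{(1)}$. Indeed, for $p \in S_2 \cup S_3$ we have $F_p(E(\Q_p)) \subseteq U_p$, so any Selmer element~$\xi$ has valuation divisible by~$\ell$ at every prime above every $p \in S \setminus S_1$; by the definition of $K(S,\ell)$ this means $\xi$ admits a representative lying in $K(S_1,\ell)$, and the $(1)$-eigenspace property is preserved since the Galois action commutes with taking valuations. Conversely, any $\xi \in K(S_1,\ell)^{(1)}$ admits a $p$-adic unit representative at every $p \in S_2 \cup S_3$, hence satisfies $r_p(\xi) \in U_p$ automatically; this fulfils the Selmer condition at all $p \in S_3$ (where $U_p = F_p(E(\Q_p))$) and trivially at all $p \in S_1$ (where $F_p(E(\Q_p)) = H_p$).

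It remains to interpret the condition at primes $p \in S_2$, where $F_p(E(\Q_p)) = 0$. For $\xi \in K(S_1,\ell)^{(1)}$ with unit representative $u \in \CO_{K,p}^\times$, the condition $r_p(\xi) = 0$ in $K_p^\times/(K_p^\times)^\ell$ is equivalent to $u \in (\CO_{K,p}^\times)^\ell$, which is precisely vanishing of the $p$-component of~$\alpha(\xi)$ in the target of Definition~\ref{DefStar}. Taking the conjunction over all $p \in S_2$ yields the desired identification
\[ \Sel^{(\phi')}(\Q, E') = \ker\bigl(\alpha : K(S_1,\ell)^{(1)} \to K(S_2,\ell)^*\bigr). \]
The main point requiring care is not conceptual but bookkeeping: I must check that the canonical map from Definition~\ref{DefStar} is well-defined on the $(1)$-isotypic part and that the local arguments above are equivariant under the Galois action $P \mapsto aP$; this should be routine since both the valuation and the unit filtration on $K_\Fp^\times$ are Galois-stable.
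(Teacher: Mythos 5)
Your proof is correct and follows essentially the same route as the paper's: first show $\Sel^{(\phi')}(\Q,E') \subseteq K(S_1,\ell)^{(1)}$ by observing that at primes in $S \setminus S_1$ the local image lies in $U_p$, then check that the kernel of $\alpha$ records precisely the surviving local conditions (no condition at $S_1$, automatic at $S_3$, and the $\ell$-th power/unit-class condition at $S_2$). Your explicit statement of the local fact relating $r_p(\xi) \in U_p$ to $\ell$-divisibility of valuations at $\Fp \mid p$ is a slightly more detailed bookkeeping of a step the paper treats as obvious, but the argument is the same.
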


\begin{proof}
  First we show that the Selmer group is contained in $K(S_1, \ell)^{(1)}$.
  This means that for all $p \in S \setminus S_1$, the local image is
  contained in~$U_p$. But for all these primes, we have by assumption
  that the image is either trivial or equals~$U_p$, so the condition
  is satisfied.

  Now we check that the elements in the kernel of~$\alpha$ are exactly
  those that satisfy the local conditions at all $p \in S$. If
  $p \in S \setminus (S_1 \cup S_2)$, then the local image equals~$U_p$,
  and this condition is already taken care of since $p \notin S_1$.
  If $p \in S_1$, then the local image is all of~$H_p$, and therefore
  there is no condition. Finally, if $p \in S_2$, then the local
  image is trivial, which means that the Selmer group
  elements are represented by elements of~$K(S_1, \ell)$ that are
  $\ell$th powers in $K_p$. Since $p \notin S_1$, we can always find
  a representative that is a unit in~$K_p$; then the condition says
  that the image in $\CO_{K,p}^\times/(\CO_{K,p}^\times)^\ell$ is trivial.
  Since
  \[ \ker \alpha = \bigcap_{p \in S_2}
              \ker\Bigl(K(S_1, \ell)^{(1)}
                         \to \frac{\CO_{K,p}^\times}{(\CO_{K,p}^\times)^\ell}\Bigr)\,,
  \]
  the claim follows.
\end{proof}

We denote by $K'$, $F'$, $F'_p$, $H'_p$, $U'_p$ etc.\ the objects corresponding
to $K$, $F$, $F_p$, $H_p$, $U_p$ etc.\ for the dual isogeny. Then it is a fact
that there is a perfect pairing
\[ H_p \times H'_p \To \frac{\frac{1}{\ell}\Z}{\Z} \cong \F_\ell \]
(induced by cup product and the Weil pairing on $H^1$'s) such that
the images of~$F_p$ and~$F'_p$ are exact annihilators of each other.
(See~\cite[Cor.~I.2.3 and~I.3.4]{Milne}; the last statement follows from the
compatibility of the two pairings.)
In particular, $\im(F_p) = 0$ is equivalent to $\im(F'_p) = H'_p$,
and $\im(F_p) = H_p$ is equivalent to $\im(F'_p) = 0$.
In addition, we find that the $\F_\ell$-dimensions satisfy
\[ \dim H_p = \dim H'_p = \dim \im(F_p) + \dim \im(F'_p) \,. \]
If $p \neq \ell$, then by Lemma~3.8 in~\cite{Schaefer}, we have
\[ \# \frac{E(\Q_p)}{\phi'(E'(\Q_p))}
    = \# \im(F_p)
    = \frac{c_p(E)}{c_p(E')} \# E'(\Q_p)[\phi']
\]
and an analogous relation for~$\phi$. If $p = \ell$, then the last
expression has to be multiplied by $\ell^{v_\ell(\gamma')}$, where
$(\phi')^*(\omega_E) = \gamma' \omega_{E'}$ and $\omega_E$, $\omega_{E'}$
are the differentials associated to a minimal Weierstrass model.
This prompts the following definition.

\begin{Definition} \label{Defw}
  We set $w = v_\ell(\gamma')$.
\end{Definition}

We define $\gamma$ by $\phi^*(\omega_{E'}) = \gamma \omega_E$, then
$\gamma \gamma' = \ell$, and so we have $v_\ell(\gamma) = 1 - w$.
We obtain
\[ \dim \im(F_p) + \dim \im(F'_p)
    = \dim E(\Q_p)[\phi] + \dim E'(\Q_p)[\phi']
       + \begin{cases}
           0 & \text{if $p \neq \ell$,} \\ 1 & \text{if $p = \ell$.}
         \end{cases}
\]

We will need to determine~$w$. This is done in the following lemma.
We denote by $\Omega(E) = \int_{E(\R)} |\omega_E|$ the real period
of~$E$. Recall that we had fixed $E$ to be the curve with $E(\R)[\phi] \neq 0$
(and therefore, we have $E'(\R)[\phi'] = 0$).

\begin{Lemma} \label{Lemw}
  We have $\Omega(E)/\Omega(E') = \ell^w$.
\end{Lemma}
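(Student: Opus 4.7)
The plan is to express the ratio $\Omega(E)/\Omega(E')$ in terms of $\gamma'$ by pulling $\omega_E$ back along $\phi'$, and then to argue that $|\gamma'|$ is itself the expected power of~$\ell$.

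First I would use the normalization $E'(\R)[\phi'] = 0$: this makes $\phi'$ restrict to an injective smooth homomorphism $E'(\R) \to E(\R)$. Combined with the defining relation $(\phi')^{*}\omega_E = \gamma'\omega_{E'}$, the change-of-variables formula applied to this injective map yields
\[
  \int_{\phi'(E'(\R))} |\omega_E|
    \;=\; \int_{E'(\R)} |(\phi')^{*}\omega_E|
    \;=\; |\gamma'|\cdot \Omega(E') \,.
\]

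Next I would verify that $\phi'$ is already surjective on $\R$-points, so that the left-hand side is $\Omega(E)$. Since $\phi' \circ \phi = [\ell]_E$, one has $\phi'(E'(\R)) \supset \phi'(\phi(E(\R))) = [\ell]\,E(\R)$. The real locus $E(\R)$ is a compact abelian Lie group of dimension one, so isomorphic to $S^1$ or $S^1 \times \Z/2\Z$; because $\ell$ is odd, $[\ell]$ is surjective on both factors, hence on $E(\R)$. Thus $\phi'(E'(\R)) = E(\R)$, and the previous display becomes $\Omega(E) = |\gamma'|\,\Omega(E')$.

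It then remains to show $|\gamma'| = \ell^{w}$. Since both $\omega_E$ and $\omega_{E'}$ come from globally minimal Weierstrass equations, they extend to N\'eron differentials on the identity components of the N\'eron models of~$E$ and~$E'$, and $\phi'$ extends to a homomorphism between these group schemes; invariance of these differentials under translation then forces $\gamma' \in \Z$. The identity $\gamma\gamma' = \ell$ (obtained by pulling back through $\phi\circ\phi' = [\ell]_{E'}$) leaves only the four possibilities $\gamma' \in \{\pm 1, \pm\ell\}$, so $|\gamma'| = \ell^{v_\ell(\gamma')} = \ell^{w}$ as required.

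The main subtlety is the surjectivity of $\phi'$ on real points, where both the oddness of $\ell$ and the normalization enter essentially; the integrality $\gamma' \in \Z$ is a standard consequence of the N\'eron model formalism and requires no further computation.
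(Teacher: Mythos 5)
Your proposal is correct and follows essentially the same route as the paper's proof: use the real normalization $E'(\R)[\phi'] = 0$ to make $\phi'$ a bijection (hence isomorphism) $E'(\R)\to E(\R)$, compute $\Omega(E) = |\gamma'|\,\Omega(E')$ by pulling back $\omega_E$, and then use integrality of $\gamma,\gamma'$ together with $\gamma\gamma' = \ell$ to conclude $|\gamma'| = \ell^w$. You merely spell out two points the paper treats as standard — the surjectivity of $\phi'$ on $\R$-points (the paper simply calls it an isomorphism) and the Néron-model justification that $\gamma'\in\Z$.
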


\begin{proof}
  Since $E'(\R)[\phi'] = 0$,
  $\phi'$ is an isomorphism from $E'(\R)$ to~$E(\R)$. Hence
  \[ |\gamma'| \Omega(E') = \int_{E'(\R)} |\gamma' \omega_{E'}|
                          = \int_{E'(\R)} |(\phi')^* \omega_E|
                          = \int_{E(\R)} |\omega_E| = \Omega(E) \,.
  \]
  We know that $\phi^* \omega_{E'}$ is an integral multiple of~$\omega_E$
  and that ${\phi'}^* \omega_E$ is an integral multiple of~$\omega_{E'}$;
  also $(\phi' \circ \phi)^* \omega_E = \ell \omega_E$. Therefore, $|\gamma'| = 1$
  (and $w = 0$) or $|\gamma'| = \ell$ (and $w = 1$), and the claim follows.
\end{proof}

Since we can easily compute the real periods using a system like {\sf MAGMA},
Sage or PARI-gp, $w$ can be determined for any given isogeny. In some cases,
the periods are computed with respect to the given model, so it is important
to use globally minimal models of the two curves to get correct results.

We will show below in Section~\ref{Tate}
that when $p \in S$, but $p \neq \ell$, then we always
have either trivial or full local image, and that the two cases are
distinguished by looking at the quotient $c_p(E)/c_p(E')$.
The only possible problem can therefore occur when $p = \ell$. If $\ell$
divides one of the Tamagawa numbers $c_\ell(E)$ or $c_\ell(E')$, then
the result is the same as for $p \neq \ell$. Otherwise, we see
by the above that the following holds.

\begin{samepage}
\begin{Lemma} \label{LemmaEll1}
  Assume that $\ell \nmid c_\ell(E) c_\ell(E')$.
  \begin{enumerate}
    \item If $E'(\Q_\ell)[\phi'] = 0$ and $w = 0$, then
          $\im(F_\ell) = 0$ and $\im(F'_\ell) = H'_\ell$.
    \item If $E(\Q_\ell)[\phi] = 0$ and $w = 1$, then
          $\im(F_\ell) = H_\ell$ and $\im(F'_\ell) = 0$.
  \end{enumerate}
\end{Lemma}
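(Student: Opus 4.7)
The plan is to directly compute the $\F_\ell$-dimension of one of the two images from the numerical formula stated just before Definition~\ref{Defw}, and then deduce the dimension of the other image from the fact, recalled in the same paragraph, that $\im(F_\ell)$ and $\im(F'_\ell)$ are exact annihilators of each other under the perfect local pairing $H_\ell \times H'_\ell \to \F_\ell$.

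For case (1), applying that formula at $p = \ell$ gives
\[ \dim \im(F_\ell)
    = v_\ell\!\left(\frac{c_\ell(E)}{c_\ell(E')}\right)
      + \dim E'(\Q_\ell)[\phi'] + w. \]
The three hypotheses kill the three summands in turn: $\ell \nmid c_\ell(E) c_\ell(E')$ gives $v_\ell(c_\ell(E)/c_\ell(E')) = 0$, the assumption $E'(\Q_\ell)[\phi'] = 0$ makes the second term vanish, and $w = 0$ handles the third. Hence $\im(F_\ell) = 0$. Since $\im(F_\ell)$ and $\im(F'_\ell)$ are exact annihilators, we get $\dim \im(F'_\ell) = \dim H'_\ell - \dim \im(F_\ell) = \dim H'_\ell$, and the inclusion $\im(F'_\ell) \subseteq H'_\ell$ is therefore an equality.

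Case (2) is symmetric: the analogue of the formula for the dual isogeny reads
\[ \dim \im(F'_\ell)
    = v_\ell\!\left(\frac{c_\ell(E')}{c_\ell(E)}\right)
      + \dim E(\Q_\ell)[\phi] + (1 - w), \]
where the exponent $1 - w$ is $v_\ell(\gamma)$, read off from $\gamma \gamma' = \ell$. Under the hypotheses $\ell \nmid c_\ell(E) c_\ell(E')$, $E(\Q_\ell)[\phi] = 0$, and $w = 1$, each summand vanishes, so $\im(F'_\ell) = 0$, and then $\im(F_\ell) = H_\ell$ follows by the same annihilator argument.

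There is no real obstacle here; the only thing to watch is the bookkeeping of the two correction factors at $p = \ell$, namely $\ell^w$ for $\im(F_\ell)$ and $\ell^{1-w}$ for $\im(F'_\ell)$. The identity $w + (1-w) = 1$ matches the extra ``$+1$'' in the dimension formula $\dim \im(F_p) + \dim \im(F'_p) = \dim E(\Q_p)[\phi] + \dim E'(\Q_p)[\phi'] + 1$ at $p = \ell$, which provides a consistency check on the computation.
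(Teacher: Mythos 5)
Your proof is correct and is exactly the ``by the above'' argument the paper intends: apply Schaefer's local index formula at $p=\ell$ (with the extra factor $\ell^{w}$, respectively $\ell^{1-w}$) to see that one image is trivial under the stated hypotheses, then invoke the exact-annihilator property of the local Tate pairing to conclude that the other image is everything. The consistency check via $w+(1-w)=1$ against the formula $\dim\im(F_\ell)+\dim\im(F'_\ell)=\dim E(\Q_\ell)[\phi]+\dim E'(\Q_\ell)[\phi']+1$ is a nice touch but not needed.
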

\end{samepage}

Since $\Q_\ell$ does not contain~$\mu_\ell$, it is not possible that
both $E(\Q_\ell)[\phi]$ and~$E'(\Q_\ell)[\phi']$ are nontrivial. The
cases that are left are therefore
\begin{itemize}
  \item $E'(\Q_\ell)[\phi'] \neq 0$ and $w = 0$ \quad and
  \item $E(\Q_\ell)[\phi] \neq 0$ and $w = 1$.
\end{itemize}
Then both local images at~$\ell$ are one-dimensional subspaces of the
two-dimensional space $H_\ell$ or~$H'_\ell$. It will turn out that
$\im(F'_\ell) = U'_\ell$ in the first and $\im(F_\ell) = U_\ell$ in
the second of the two cases, see Lemma~\ref{Ell} below.
So we will be able to compute at least
one of the two Selmer groups $\Sel^{(\phi')}(\Q, E')$ and $\Sel^{(\phi)}(\Q, E)$
easily. There is a formula due to Cassels that relates the sizes of
these two groups, see Section~\ref{SCassels} below. This allows us
to deduce the size of the other Selmer group. Cassels' formula may also
be useful when both Selmer groups can be computed by our methods,
since one of the two number fields $K$ and~$K'$ may be significantly
easier to deal with (for example because it is of lower degree). We
can then compute the easier group and deduce the size of the other one
by Cassels' formula.


\section{Tate Curves} \label{Tate}

We note that for all primes $\ell \neq p \in S$, we have that $\ell$
divides $c_p(E)$ or~$c_p(E')$, and that our assumption that $\ell > 3$ implies that
if $\ell \mid c_p(E) c_p(E')$ for any prime~$p$, then
both curves must have split multiplicative reduction at~$p$.
We can then use the Tate parametrization to obtain information on the
images of $F_p$ and~$F'_p$. This approach has also been used in some of
the earlier papers mentioned in the introduction.

Our reference for the following is~\cite[\S\,V.3]{SilvermanII}, in
particular Theorem~V.3.1.
If an elliptic curve~$E$ has split multiplicative reduction at~$p$, then
there is $q \in \Q_p^\times$ with $v(q) > 0$ such that
$E(\Q_p) \cong \Q_p^\times/q^{\Z}$. The subgroup of points with nonsingular
reduction is $E(\Q_p)^0 \cong \Z_p^\times$, and the kernel of reduction
is $E(\Q_p)^1 \cong (1 + p \Z_p)$. Therefore we find that
\[ E(\Q_p)^0/E(\Q_p)^1 \cong \Z_p^\times/(1 + p\Z_p) \cong \F_p^\times \]
(as must be the case for split multiplicative reduction) and that the
component group is
\[ \Phi_p = E(\Q_p)/E(\Q_p)^0 \cong \Z/v_p(q)\Z \]
where the isomorphism is induced by the valuation on~$\Q_p^\times$.
In particular, the Tamagawa number is $c_p(E) = v_p(q)$.
This description of~$E(L)$ carries over to all finite extensions~$L$
of~$\Q_p$.

The $\ell$-torsion subgroup of~$E$
is generated by $\mu_\ell$ and $q_\ell$, where $q_\ell^\ell = q$.
So we have a point of order~$\ell$ in~$E(\Q_p)$ if either $\mu_\ell(\Q_p)$
is nontrivial, which means $p \equiv 1 \bmod \ell$, or if $q$ is an
$\ell$th power in~$\Q_p$. In any case, the cyclic subgroups of order~$\ell$
are $\mu_\ell$ and the subgroups generated by some~$q_\ell$. The first
kind of subgroup is always defined over~$\Q_p$, the second kind only
if $q_\ell \in \Q_p^\times$.

In the first case, the corresponding isogenous curve is
$E'(\Q_p) \cong \Q_p^\times/(q^\ell)^{\Z}$, where the isogeny~$\phi$ is induced
by $z \mapsto z^\ell$. The dual isogeny~$\phi'$ is induced by the identity map,
which implies that $E(\Q_p)/\phi'(E'(\Q_p))$ is trivial. Note that in this case we
have $c_p(E') = v_p(q^\ell) = \ell v_p(q) = \ell c_p(E)$.

In the second case, the corresponding isogenous curve is
$E'(\Q_p) \cong \Q_p^\times/q_\ell^{\Z}$, with isogeny~$\phi$ induced by the
identity. The dual isogeny~$\phi'$ is induced by $z \mapsto z^\ell$, so
we have $E(\Q_p)/\phi'(E'(\Q_p)) \cong \Q_p^\times/(\Q_p^\times)^\ell$,
and $c_p(E) = \ell c_p(E')$.

This leads to the following result.

\begin{Lemma} \label{LNotEll}
  Let $p$ be a prime number.
  \begin{enumerate}
    \item If $c_p(E') = \ell c_p(E)$, then $\im(F_p) = 0$ and $\im(F'_p) = H'_p$.
    \item If $c_p(E) = \ell c_p(E')$, then $\im(F_p) = H_p$ and $\im(F'_p) = 0$.
  \end{enumerate}
  If $\ell \neq p \in S$, then we are in one of these two cases.
\end{Lemma}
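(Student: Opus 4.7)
My plan is to harvest both parts essentially from the Tate-parametrization analysis in the preceding paragraphs, using the perfect local pairing $H_p \times H'_p \to \F_\ell$ to convert a trivial image on one side into a full image on the other; the final sentence will then follow from a short Kodaira-types argument.

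For part~(1), I would first note that the hypothesis $c_p(E') = \ell c_p(E)$ already forces split multiplicative reduction at~$p$: non-multiplicative Kodaira types have $c_p \le 4$ and non-split multiplicative has $c_p \in \{1,2\}$, so neither is compatible with a factor of $\ell > 3$ between the two Tamagawa numbers. This puts us in the first of the two Tate-parametric subcases treated above the lemma, in which $E[\phi] = \mu_\ell$ and $\phi'$ is induced by the identity $\Q_p^\times \to \Q_p^\times$. That induced map is surjective on $\Q_p$-points, so $\im(F_p) \cong E(\Q_p)/\phi'(E'(\Q_p)) = 0$, and then $\im(F'_p) = H'_p$ by the annihilator property of the perfect pairing.

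Part~(2) I would handle in exactly the same way using the second Tate subcase: the hypothesis forces $\phi$ itself to be induced by the surjective quotient $\Q_p^\times/q^\Z \twoheadrightarrow \Q_p^\times/q_\ell^\Z$, so the dual Kummer sequence gives $\im(F'_p) \cong E'(\Q_p)/\phi(E(\Q_p)) = 0$, and the pairing delivers $\im(F_p) = H_p$. For the last assertion, $p \in S$ with $p \neq \ell$ means $\ell \mid c_p(E) c_p(E')$; the same reduction-type dichotomy forces split multiplicative reduction at~$p$ for both curves (which share a type, being isogenous), and the Tate analysis then produces $c_p(E)/c_p(E') \in \{\ell, 1/\ell\}$, placing us in (1) or (2). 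The only subtle point is the bookkeeping matching each hypothesis on Tamagawa numbers to the correct Tate subcase---i.e., which of $\phi$, $\phi'$ is induced by the $\ell$-th-power map and which by the identity---and since the preceding exposition has already pinned those identifications down, no genuine obstacle remains.
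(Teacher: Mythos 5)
Your proof is correct and takes essentially the same route as the paper's: identify $\im(F_p)$ and $\im(F'_p)$ with the quotients $E(\Q_p)/\phi'(E'(\Q_p))$ and $E'(\Q_p)/\phi(E(\Q_p))$, read these off from the Tate parametrization (matching the sign of the Tamagawa ratio to whether $E[\phi]$ is $\mu_\ell$ or $\langle q_\ell\rangle$), and invoke the annihilator property of the local pairing established in Section~\ref{Idea} to pass from one image to the other. The only addition you make -- spelling out that $c_p(E') = \ell c_p(E)$ with $\ell > 3$ already forces split multiplicative reduction, via the Kodaira-type bound $c_p \le 4$ for additive and $c_p \in \{1,2\}$ for non-split multiplicative -- is a point the paper records a couple of paragraphs earlier in Section~\ref{Tate} rather than in the proof itself, so your presentation is just slightly more self-contained.
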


\begin{proof}
  We have seen in the discussion above that if the curves $E$ and~$E'$
  have split multiplicative reduction at~$p$, then we are in one of the
  two cases given in the statement. The claims on the local images follow from
  $\im(F_p) \cong E(\Q_p)/\phi'(E'(\Q_p))$ and $\im(F'_p) \cong E'(\Q_p)/\phi(E(\Q_p))$
  and the discussion preceding the statement of the lemma.
  If $\ell \neq p \in S$, then we must have split multiplicative reduction,
  therefore the first part applies.
\end{proof}


\section{The local image at $\ell$}

As mentioned at the end of Section~\ref{Idea}, the only cases that are
left to consider are when $\ell \nmid c_\ell(E) c_\ell(E')$ and either
\[ E'(\Q_\ell)[\phi'] \neq 0 \text{\ and\ } w = 0\,, \qquad\text{or}\qquad
   E(\Q_\ell)[\phi] \neq 0 \text{\ and\ } w = 1\,.
\]
Note that the case $\ell \mid c_\ell(E) c_\ell(E')$ is taken care of
by Lemma~\ref{LNotEll}.

We now have the following result.

\begin{Lemma} \label{Ell}
  Assume that $\ell \nmid c_\ell(E) c_\ell(E')$.
  \begin{enumerate}
    \item If $E'(\Q_\ell)[\phi'] \neq 0$ and $w = 0$, then $\im(F'_\ell) = U'_\ell$
          and $U_\ell = H_\ell$.
    \item If $E(\Q_\ell)[\phi] \neq 0$ and $w = 1$, then $\im(F_\ell) = U_\ell$
          and $U'_\ell = H'_\ell$.
  \end{enumerate}
\end{Lemma}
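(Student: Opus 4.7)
The plan is to reduce case~(2) to case~(1) by the symmetry interchanging $(E,\phi,F,K)\leftrightarrow(E',\phi',F',K')$ (under which $w=v_\ell(\gamma')$ is replaced by $1-w$ since $\gamma\gamma'=\ell$), and then to focus on case~(1). Since $\mu_\ell\not\subset\Q_\ell$, the Weil pairing $E[\phi]\times E'[\phi']\to\mu_\ell$ forces $E(\Q_\ell)[\phi]=0$. Combining the local Euler characteristic formula with the $\ell^w$-corrected Lemma~3.8 of~\cite{Schaefer} (the formula stated just before Definition~\ref{Defw}) then yields $\dim H_\ell=\dim H'_\ell=2$ and $\dim\im(F_\ell)=\dim\im(F'_\ell)=1$, so the task reduces to identifying the one-dimensional subspaces $\im(F'_\ell)\subseteq H'_\ell$ and~$U_\ell\subseteq H_\ell$.

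The identity $\chi\chi'=\omega$ together with the hypothesis $\chi'|_{D_\ell}=1$ (equivalent to $E'(\Q_\ell)[\phi']\neq 0$) yields $\chi|_{D_\ell}=\omega|_{D_\ell}$, so $K_\ell\cong\Q_\ell(\mu_\ell)$ and $\ell$ splits completely in~$K'/\Q$, giving $K'_\ell\cong\Q_\ell^d$ with $d=[K':\Q]$. To prove $U_\ell=H_\ell$, I would take the $\chi=\omega$-eigenspace under $\Delta=\operatorname{Gal}(K_\ell/\Q_\ell)$ of the exact sequence
\[
0 \to \CO_{K_\ell}^\times/(\CO_{K_\ell}^\times)^\ell
      \to K_\ell^\times/(K_\ell^\times)^\ell
      \xrightarrow{v_\ell} \Z/\ell\Z \to 0.
\]
Since $|\Delta|=\ell-1$ is prime to~$\ell$, eigenspaces are exact; the $\Delta$-action on $\Z/\ell\Z$ is trivial while $\omega|_\Delta$ is non-trivial, so the right eigenspace vanishes, whence $U_\ell=H_\ell$. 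For the second equality, a Frobenius reciprocity computation on $K'^\times_\ell/(K'^\times_\ell)^\ell=\operatorname{Ind}_1^{\operatorname{Gal}(K'/\Q)}\bigl(\Q_\ell^\times/(\Q_\ell^\times)^\ell\bigr)$ gives $\dim U'_\ell=1$, so by dimension count it suffices to establish $\im(F'_\ell)\subseteq U'_\ell$.

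The key observation is that $\operatorname{div}(F')=\ell((P')-(O))$ has all coefficients divisible by~$\ell$. I would verify: (a)~$P'$ lies on the identity component of the N\'eron model of~$E'/\Z_\ell$, as $P'$ has order~$\ell$ while the component group has order $c_\ell(E')$ prime to~$\ell$; and (b)~$P'$ does not reduce to~$O$, because $w=0$ forces $\phi'$ to be an isomorphism of formal groups and hence $\ker\phi'\cap E'_1(\Q_\ell)=0$. Combined with the expansion $F'(t)=t^{-\ell}(1+tf(t))$ at~$O$ (with $t(Q)\in\ell\Z_\ell$ for $Q\in E'_1(\Q_\ell)$) and a valuation argument at generic smooth points, these force $F'(Q)\in U'_\ell$ for every $Q\in E'^0(\Q_\ell)$. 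Since $[E'(\Q_\ell):E'^0(\Q_\ell)]=c_\ell(E')$ is prime to~$\ell$ and $\im(F'_\ell)$ is an $\F_\ell$-vector space, the inclusion extends to all of~$E'(\Q_\ell)$. The main obstacle I anticipate is the last step in bad (additive) reduction at~$\ell$: there $\operatorname{div}(F')$ on the N\'eron model picks up vertical components, and one must verify that their contributions to $v_\wp(F'(Q))$ are themselves divisible by~$\ell$, most cleanly by working on the identity component $\mathcal{E}'^0$ directly and exploiting $\ell\nmid c_\ell(E')$.
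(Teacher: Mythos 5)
Your proposal follows the same skeleton as the paper's proof: reduce one case to the other by the $(E,\phi)\leftrightarrow(E',\phi')$ symmetry, observe that the kernel point lies on the identity component of the N\'eron model and does not reduce to the origin, reduce by a dimension count to showing $\im(F'_\ell)\subset U'_\ell$, and establish the equality $U_\ell=H_\ell$ by a computation in $\Q_\ell(\mu_\ell)$. Two of your intermediate steps are valid alternatives to what the paper does: the eigenspace argument for $U_\ell=H_\ell$ is slicker than the paper's explicit generators $\zeta$, $1+\lambda^\ell$; and using $w=0$ to deduce $\bar P'\ne\bar O$ via the formal group works (the paper leaves this implicit, resting on torsion-freeness of $\hat E'(\ell\Z_\ell)$ for $\ell>2$).

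The gap is exactly in the step you flag yourself. You need $v(F'(Q))\equiv 0\pmod\ell$ for $Q$ not reducing to $O$ or $P'$, uniformly over the reduction type of $E'$ at $\ell$, but "a valuation argument at generic smooth points" does not establish this without further input: a priori nothing prevents the normalized $F'$ from having $\ell$ in its denominators, in which case its value at an integral point need not be a unit. Your proposed remedy via the divisor of $F'$ on $\mathcal{E}'^0$ and its vertical components is not carried out, and it is precisely the case of additive reduction at $\ell$ where this is delicate. The paper's resolution sidesteps all of this: once $P'$ has nonsingular reduction on the \emph{minimal} Weierstrass model, the normalized descent map has the form $F'=f(x)+g(x)y$ with $f,g\in\Z_\ell[x]$, $\deg f\le(\ell-1)/2$, and $g$ \emph{monic} of degree $(\ell-3)/2$. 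This integrality plus the norm relation $F'(Q)F'(-Q)=\pm(x(Q)-x(P'))^\ell$ immediately gives $F'(Q)\in\Z_\ell^\times$ for every $Q$ with $\bar Q\notin\{\bar O,\bar P'\}$ (including $Q$ with singular reduction), and the two remaining residue discs are handled by the homomorphism identities $F'_\ell(Q)=1/F'_\ell(-Q)$ and $F'_\ell(Q)=F'_\ell(Q-P')/F'_\ell(-P')$, whose right-hand sides involve only points whose reductions avoid $\bar O$ and $\bar P'$. No N\'eron-model divisor bookkeeping and no case analysis by reduction type are needed; everything happens on a single affine chart. Without the integrality and monicity of $f,g$, your sketch has no purchase on exactly the bad-reduction case you identify as problematic, so you should make this explicit (or supply an argument for the vertical-component claim on $\mathcal{E}'^0$) before the proof is complete.
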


\begin{proof}
  It suffices to prove the second assertion (say), the other one following
  by symmetry. We have $E(\Q_\ell)[\phi] \neq 0$, so the kernel
  is generated by some $P \in E(\Q_\ell)$, and
  $H_\ell \cong \Q_\ell^\times/(\Q_\ell^\times)^\ell$. Since $\ell$ does
  not divide $c_\ell(E)$, the point~$P$ must have nonsingular reduction.
  In terms of a minimal integral Weierstrass equation, the descent map
  is then given by a polynomial $f(x) + g(x) y \in \Z_\ell[x,y]$ (with
  $\deg f \le (\ell-1)/2$ and $g$ monic of degree $(\ell-3)/2$). It follows
  that $F_\ell(Q)$ is a unit for all points $Q \in E(\Q_\ell)$ that do not
  reduce to the same point as the origin or~$P$. For points in the same
  residue class as~$P$, we use that $F_\ell(Q) = 1/F_\ell(-Q)$. For points
  in the kernel of reduction, we use that $F_\ell(Q) = F_\ell(Q-P)/F_\ell(-P)$.
  So we see that $\im(F_\ell) \subset U_\ell$. Since both sides are
  of dimension~$1$ (for $\im(F_\ell)$ we use the assumption $w=1$ here),
  they must be equal. The statement on $U'_\ell$ follows by inspection of
  $H'_\ell \cong \bigl(\Q_\ell(\mu_\ell)^\times/(\Q_\ell(\mu_\ell)^\times)^\ell)^{(1)}$,
  which is generated by the images of the units $\zeta$ and $1 + \lambda^\ell$,
  where $\zeta$ is a primitive $\ell$th root of unity and $\lambda = 1-\zeta$.
\end{proof}


\section{Cassels' formula} \label{SCassels}

We make the following definition.

\begin{Definition} \label{DefS}
\[ \Sigma_1 = \{p : c_p(E) = \ell c_p(E')\} \quad\text{and}\quad
   \Sigma_2 = \{p : c_p(E') = \ell c_p(E)\} \,.
\]
Then $S = \Sigma_1 \cup \Sigma_2 \cup \{\ell\}$. Note that by the discussion
in Section~\ref{Tate}, $\Sigma_1 \cup \Sigma_2$ is exactly the set of
primes where $E$ (or equivalently, $E'$) has split multiplicative reduction.
\end{Definition}

Cassels~\cite{Cassels} has established a formula relating the sizes of
$\Sel^{(\phi')}(\Q, E')$ and $\Sel^{(\phi)}(\Q, E)$. It reads as follows.
\[ \frac{\#\Sel^{(\phi)}(\Q, E)}{\#\Sel^{(\phi')}(\Q, E')}
    = \frac{\#E(\Q)[\phi]}{\#E'(\Q)[\phi']}\,\frac{\Omega(E')}{\Omega(E)}
        \,\prod_q \frac{c_q(E')}{c_q(E)}
\]
If $E(\Q)[\phi] = 0$, then the right hand side is (by Lemma~\ref{Lemw} and
Definition~\ref{DefS}) $\ell^{\#\Sigma_2-\#\Sigma_1-w}$. Otherwise it is
$\ell^{\#\Sigma_2-\#\Sigma_1+1-w}$. (Note that $E'(\Q)[\phi'] = 0$ according to
our convention, since the nontrivial points in this kernel are not real.)
In terms of $\F_\ell$-dimensions, this says the following.

\begin{Lemma} \label{Cassels}
  \[ \dim \Sel^{(\phi)}(\Q, E) + \#\Sigma_1 + w
         = \dim \Sel^{(\phi')}(\Q, E') + \#\Sigma_2 + \dim E(\Q)[\phi]\,.
  \]
\end{Lemma}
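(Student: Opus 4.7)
The plan is simply to take the $\ell$-adic valuation of Cassels' formula, which is displayed immediately above the lemma, and to match each factor on the right with the appropriate contribution. Since all Selmer groups involved are $\F_\ell$-vector spaces, $\log_\ell$ of the left-hand side of the formula equals $\dim \Sel^{(\phi)}(\Q, E) - \dim \Sel^{(\phi')}(\Q, E')$, so the lemma will reduce to a verification that the three factors on the right-hand side have $\ell$-adic valuations $\dim E(\Q)[\phi]$, $-w$, and $\#\Sigma_2 - \#\Sigma_1$ respectively.

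First I would handle the torsion ratio. Our convention that $E'(\R)[\phi'] = 0$ forces $E'(\Q)[\phi'] = 0$, so $\#E'(\Q)[\phi'] = 1$, while $\#E(\Q)[\phi]$ is either $1$ or $\ell$ (as $E[\phi]$ has order $\ell$), hence equals $\ell^{\dim E(\Q)[\phi]}$. The period ratio $\Omega(E')/\Omega(E) = \ell^{-w}$ is exactly the statement of Lemma~\ref{Lemw}, so this contributes $-w$ to the valuation.

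The remaining ingredient is the local product $\prod_q c_q(E')/c_q(E)$. Here I would invoke Section~\ref{Tate}: because $\ell > 3$, whenever $\ell$ divides $c_p(E)$ or $c_p(E')$, the curves $E$ and $E'$ have split multiplicative reduction at $p$, and the Tate parametrization computation given there shows that $c_p(E')/c_p(E)$ equals $\ell$ for $p \in \Sigma_2$, $1/\ell$ for $p \in \Sigma_1$, and is an $\ell$-adic unit at any other prime. Since the product is a rational number whose total $\ell$-valuation is determined by the other factors in Cassels' formula (all of which are exact powers of $\ell$), only primes in $\Sigma_1 \cup \Sigma_2$ can contribute to that valuation, and the contribution is $\#\Sigma_2 - \#\Sigma_1$.

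Combining these three computations yields
\[
  \dim \Sel^{(\phi)}(\Q, E) - \dim \Sel^{(\phi')}(\Q, E')
    = \dim E(\Q)[\phi] - w + \#\Sigma_2 - \#\Sigma_1,
\]
which rearranges to the claim. There is no real obstacle here; the only point that requires care is the bookkeeping at primes outside $\Sigma_1 \cup \Sigma_2$, and this is already dealt with by the Tate curve analysis in Section~\ref{Tate}.
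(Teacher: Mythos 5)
Your proof is correct and takes essentially the same route as the paper: apply $\log_\ell$ (equivalently, $\ell$-adic valuation) to Cassels' formula, handle the period ratio via Lemma~\ref{Lemw}, the torsion ratio via the convention $E'(\R)[\phi'] = 0$, and the Tamagawa quotient via Definition~\ref{DefS} and the Tate-curve discussion. One small redundancy: the sentence claiming the product's $\ell$-valuation is ``determined by the other factors in Cassels' formula'' is an unnecessary detour, since you have already observed directly that $c_p(E')/c_p(E)$ is an $\ell$-adic unit outside $\Sigma_1 \cup \Sigma_2$ (for $\ell > 3$, Tamagawa numbers at primes of additive or nonsplit multiplicative reduction are at most $4$), which alone gives the contribution $\#\Sigma_2 - \#\Sigma_1$.
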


We can combine the information from both Selmer groups in the following way.

\begin{Lemma} \label{combine}
  Let $r$ denote the rank of~$E(\Q)$ (and~$E'(\Q)$). Then we have
  \begin{align*}
    r + \dim \Sha(\Q, E)[\ell]
      &\le r + \dim \Sha(\Q, E')[\phi'] + \dim \Sha(\Q, E)[\phi] \\
      &= \dim \Sel^{(\phi')}(\Q, E') + \dim \Sel^{(\phi)}(\Q, E)
         -\dim E(\Q)[\phi] \\
      &= 2\dim \Sel^{(\phi')}(\Q, E') - \#\Sigma_1 + \#\Sigma_2 - w \\
      &= 2 \bigl(\dim \Sel^{(\phi)}(\Q, E) - \dim E(\Q)[\phi]\bigr)
           + \#\Sigma_1 - \#\Sigma_2 + w \,.
  \end{align*}
  The same bound holds for $\dim \Sha(\Q, E')[\ell]$.
  In particular, we get an upper bound for $\dim \Sha(\Q, E)[\ell]$
  and $\dim \Sha(\Q, E')[\ell]$ if
  we know the rank~$r$ and the size of one of the two Selmer groups.
\end{Lemma}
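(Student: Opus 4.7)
The plan is to prove the displayed chain line by line, and then observe that the concluding bound for $\Sha(\Q,E')[\ell]$ is symmetric. The initial inequality and the second equality carry the content; the remaining two equalities follow by substituting Cassels' formula (Lemma~\ref{Cassels}) into the second.

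For the initial inequality, I would exploit $\phi'\circ\phi=[\ell]$ on~$E$: the isogeny~$\phi$ sends $\Sha(\Q,E)[\ell]$ into $\Sha(\Q,E')[\phi']$, with kernel exactly $\Sha(\Q,E)[\phi]$, yielding a left-exact sequence
\[ 0 \To \Sha(\Q,E)[\phi] \To \Sha(\Q,E)[\ell] \To \Sha(\Q,E')[\phi'], \]
from which the dimension inequality is immediate. The symmetric argument, using $\phi\circ\phi'=[\ell]$ on~$E'$, yields the analogous bound for $\dim\Sha(\Q,E')[\ell]$ claimed at the end of the lemma.

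For the second line, I would combine the defining exact sequence
\[ 0 \To E(\Q)/\phi'E'(\Q) \To \Sel^{(\phi')}(\Q,E') \To \Sha(\Q,E')[\phi'] \To 0 \]
and its analogue for~$\phi$ with the four-term exact sequence
\[ 0 \To \frac{E'(\Q)[\phi']}{\phi(E(\Q)[\ell])} \To \frac{E'(\Q)}{\phi E(\Q)} \stackrel{\bar\phi'}{\To} \frac{E(\Q)}{\ell E(\Q)} \To \frac{E(\Q)}{\phi'E'(\Q)} \To 0, \]
which drops out of a short snake-lemma style diagram chase based on $\phi'\circ\phi=[\ell]$ (the only potentially delicate step of the proof). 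Our convention $E'(\R)[\phi']=0$ forces $E'(\Q)[\phi']=0$, which in turn—via $0\to E(\Q)[\phi]\to E(\Q)[\ell]\to E'(\Q)[\phi']$—gives $E(\Q)[\ell]=E(\Q)[\phi]$. Adding dimensions then yields
\[ \dim\frac{E'(\Q)}{\phi E(\Q)} + \dim\frac{E(\Q)}{\phi'E'(\Q)} = r + \dim E(\Q)[\phi], \]
and plugging this into the sum of the two Selmer sequences produces the desired second equality.

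The last two equalities are now routine algebra: Lemma~\ref{Cassels} rewrites as
\[ \dim\Sel^{(\phi)}(\Q,E) - \dim E(\Q)[\phi] = \dim\Sel^{(\phi')}(\Q,E') + \#\Sigma_2 - \#\Sigma_1 - w, \]
so substituting once eliminates $\dim\Sel^{(\phi)}(\Q,E)$ to give the third line, and substituting the other way eliminates $\dim\Sel^{(\phi')}(\Q,E')$ to give the fourth. The concluding \emph{in particular} assertion is then immediate, since both of these expressions involve only~$r$ together with a single Selmer dimension (the invariants $\#\Sigma_i$, $w$ and $\dim E(\Q)[\phi]$ being already computable from the two curves). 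The main obstacle, if any, is the careful verification of the four-term exact sequence above; everything else reduces to $\F_\ell$-dimension counting and an appeal to Cassels' formula.
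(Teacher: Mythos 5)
Your proposal is correct and follows essentially the same route as the paper's proof: the left-exact sequence $0\to\Sha(\Q,E)[\phi]\to\Sha(\Q,E)[\ell]\to\Sha(\Q,E')[\phi']$ for the initial inequality (and its symmetric counterpart for the final claim), the two Selmer exact sequences combined with the four-term sequence linking $E'(\Q)/\phi E(\Q)$, $E(\Q)/\ell E(\Q)$, and $E(\Q)/\phi'E'(\Q)$ for the second equality, and Cassels' formula for the last two. The only cosmetic difference is that you write the four-term sequence in its general form with leading term $E'(\Q)[\phi']/\phi(E(\Q)[\ell])$ before invoking $E'(\Q)[\phi']=0$, whereas the paper substitutes that vanishing from the start.
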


\begin{proof}
  Note first that we have $E(\Q)[\phi] = E(\Q)[\ell]$. The inclusion `$\subset$'
  is trivial. For the reverse inclusion, let $P$ be a rational $\ell$-torsion point
  on~$E$. Then $\phi(P)$ is in $E'[\phi'] \cap E'(\R) = 0$, so $P \in E(\Q)[\phi]$.

  We have the exact sequences
  \begin{gather*}
    0 \To \frac{E(\Q)}{\phi'(E'(\Q))}
      \To \Sel^{(\phi')}(\Q, E')
      \To \Sha(\Q, E')[\phi']
      \To 0 \\
    0 \To \frac{E'(\Q)}{\phi(E(\Q))}
      \To \Sel^{(\phi)}(\Q, E)
      \To \Sha(\Q, E)[\phi]
      \To 0 \\
    0 = E'(\Q)[\phi'] \To \frac{E'(\Q)}{\phi(E(\Q))}
                      \To \frac{E(\Q)}{\ell E(\Q)}
                      \To \frac{E(\Q)}{\phi'(E'(\Q))}
                      \To 0 \\
    0 \To \Sha(\Q, E)[\phi]
      \To \Sha(\Q, E)[\ell]
      \To \Sha(\Q, E')[\phi']\, ,
  \end{gather*}
  and we know that
  \[ \dim E(\Q)/\ell E(\Q) = r + \dim E(\Q)[\ell] = r + \dim E(\Q)[\phi] \,. \]
  From this, we can deduce that
  \begin{align*}
    r + \dim \Sha(\Q, E)[\ell]
      &\le  r + \dim \Sha(\Q, E')[\phi'] + \dim \Sha(\Q, E)[\phi] \\
      &= \dim \frac{E(\Q)}{\phi'(E'(\Q))}
             + \dim \frac{E'(\Q)}{\phi(E(\Q))}
             - \dim E(\Q)[\phi] \\
      &\qquad{} + \dim \Sha(\Q, E')[\phi']
            + \dim \Sha(\Q, E)[\phi] \\
      &= \dim \Sel^{(\phi')}(\Q, E')
          + \dim \Sel^{(\phi)}(\Q, E)
          - \dim E(\Q)[\phi] \\
      &= 2\dim \Sel^{(\phi')}(\Q, E') - \#\Sigma_1 + \#\Sigma_2 - w \\
      &= 2\dim \Sel^{(\phi)}(\Q, E) + \#\Sigma_1 - \#\Sigma_2 + w
          -2\dim E(\Q)[\phi] \,.
  \end{align*}
  For the last two equalities, we use Lemma~\ref{Cassels}.

  To get the bound for $\dim \Sha(\Q, E')[\ell]$, we use the exact sequence
  \[ 0 \To \Sha(\Q, E')[\phi'] \To \Sha(\Q, E')[\ell] \To \Sha(\Q, E)[\phi]\,.
     \qedhere
  \]
\end{proof}

\begin{Remark} \strut
  \begin{enumerate}
    \item \label{RkPart1}
      If $\Sha(\Q, E)$ (or equivalently, $\Sha(\Q, E')$) is finite, then the sum of the
      dimensions of $\Sha(\Q, E')[\phi']$ and $\Sha(\Q, E)[\phi]$ is even, and it
      follows that the rank~$r$ has the same parity as $\#\Sigma_1 + \#\Sigma_2 + w$.
    \item
      Recall that $\Sigma_1 \cup \Sigma_2$ is the set of primes of split
      multiplicative reduction. By \cite[Theorem 5]{DokDokParity} the root number
      $\eps(E/\Q)$ for $E$ is given in terms of local Artin symbols and the local root
      number at $\ell$:
      \[ \eps(E/\Q) = (-1)^{1+\#\Sigma_1 + \#\Sigma_2} \eps(E/\Q_\ell)
                      \prod_{p \neq \ell \text{ additive}} (-1,\Q_p(P)/\Q_p)\, \,.
      \]
      Since the parity conjecture is known
      for Selmer groups~\cite[Theorem 1.4]{DokDokBSD}
      and we are assuming $\Sha(\Q, E)$ is finite,
      the observation made in~\eqref{RkPart1} above implies that
      $(-1)^{\#\Sigma_1 + \#\Sigma_2 + w} = \eps(E/\Q)$. Combining this with the
      product formula for the Artin symbol and the fact that the Artin symbol
      is trivial for primes $p \neq \ell$ of semistable reduction
      (see~\cite{DokDokParity} again) and at~infinity according to our normalization,
      this gives us a formula for the local root number at~$\ell$:
      \[ \eps(E/\Q_\ell) = (-1)^{1-w} (-1,\Q_\ell(P)/\Q_\ell)\,. \]
      Note that since this expression only involves local data, it will
      be valid whenever $E$ has an $\ell$-isogeny defined over~$\Q_\ell$.
  \end{enumerate}
\end{Remark}


\section{The main result}

We apply Lemma~\ref{LemmaGen} to
obtain the following expressions for the Selmer groups.

\begin{Theorem} \label{thm}
  Let $\phi : E \to E'$ be an isogeny of prime degree~$\ell > 3$ of elliptic
  curves over~$\Q$, with dual isogeny $\phi' : E' \to E$, and assume that
  $E[\phi] \subset E(\R)$. Let $K$ and~$K'$ be the splitting fields
  of~$E[\phi]$ and~$E'[\phi']$, respectively, and define $\Sigma_1$, $\Sigma_2$
  and~$w$ as in Definitions \ref{DefS} and~\ref{Defw} above.

  If $\ell \nmid c_\ell(E) c_\ell(E')$, $w = 1$ and $E(\Q_\ell)[\phi] = 0$,
  then let $S_1 = \Sigma_1 \cup \{\ell\}$, else let $S_1 = \Sigma_1$.

  If $\ell \nmid c_\ell(E) c_\ell(E')$, $w = 0$ and $E'(\Q_\ell)[\phi'] = 0$,
  then let $S_2 = \Sigma_2 \cup \{\ell\}$, else let $S_2 = \Sigma_2$.

  Let
  \[ \alpha : K(S_1, \ell)^{(1)} \To K(S_2, \ell)^*
      \quad\text{and}\quad
      \beta : K'(S_2, \ell)^{(1)} \To K'(S_1, \ell)^*
  \]
  be the canonical maps.

  Then $\Sel^{(\phi')}(\Q, E') = \ker \alpha$ unless
  $\ell \nmid c_\ell(E) c_\ell(E')$, $w = 0$ and $E'(\Q_\ell)[\phi'] \neq 0$,
  and $\Sel^{(\phi)}(\Q, E) = \ker \beta$ unless
  $\ell \nmid c_\ell(E) c_\ell(E')$, $w = 1$ and $E(\Q_\ell)[\phi] \neq 0$.

  In the two excluded cases, we still have inclusions
  $\Sel^{(\phi')}(\Q, E') \subset \ker \alpha$ and
  $\Sel^{(\phi)}(\Q, E) \subset \ker \beta$, respectively.
\end{Theorem}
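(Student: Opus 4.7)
The plan is to derive both Selmer-group identifications by applying Lemma~\ref{LemmaGen} separately to $\phi'$ (for $\Sel^{(\phi')}(\Q, E')$) and to $\phi$ (for $\Sel^{(\phi)}(\Q, E)$), after determining for each $p \in S = \Sigma_1 \cup \Sigma_2 \cup \{\ell\}$ whether the local image $\im(F_p)$ (resp.\ $\im(F'_p)$) is $0$, equal to the unit subgroup $U_p$, or equal to the full $H_p$. The theorem is then read off by matching the sets $S_1$, $S_2$ from the statement with the ``full-image'' and ``zero-image'' sets of Lemma~\ref{LemmaGen} for each of the two Selmer groups.

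For primes $\ell \neq p \in S$, Lemma~\ref{LNotEll} places us in a pure Tate situation: $\im(F_p) = H_p$ and $\im(F'_p) = 0$ when $p \in \Sigma_1$, and the reverse when $p \in \Sigma_2$. The same analysis applies to $p = \ell$ whenever $\ell \mid c_\ell(E) c_\ell(E')$. Otherwise, since $\mu_\ell \not\subset \Q_\ell$ forbids both $E(\Q_\ell)[\phi]$ and $E'(\Q_\ell)[\phi']$ from being nontrivial, only four subcases at $\ell$ remain. Two of them are dispatched by Lemma~\ref{LemmaEll1}, which forces the local image at $\ell$ to be $0$ or $H_\ell$; these are precisely the cases in which the theorem adjoins $\ell$ to $S_2$ or to $S_1$, and Lemma~\ref{LemmaGen} applies simultaneously to both isogenies, yielding both equalities. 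In the remaining two subcases, Lemma~\ref{Ell} shows that one of the two local images at $\ell$ equals the unit subgroup, which is the ``drop'' case of Lemma~\ref{LemmaGen}; this delivers $\Sel^{(\phi)}(\Q, E) = \ker \beta$ in one subcase and $\Sel^{(\phi')}(\Q, E') = \ker \alpha$ in the other.

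The main obstacle is the dual side in each of these last two subcases: there the local image at $\ell$ is a one-dimensional subspace of the two-dimensional $H_\ell$ or $H'_\ell$, which is neither $0$, the unit subgroup, nor the full space, so Lemma~\ref{LemmaGen} cannot deliver an equality. To obtain the weaker inclusion, I would exploit the auxiliary equality $U_\ell = H_\ell$ from Lemma~\ref{Ell}(1) (resp.\ $U'_\ell = H'_\ell$ from Lemma~\ref{Ell}(2)): this automatically places every Selmer-group element inside $K(S_1, \ell)^{(1)}$ (resp.\ $K'(S_2, \ell)^{(1)}$), since its image at $\ell$ is then necessarily a unit class. The Selmer conditions at the primes of $\Sigma_2$ (resp.\ $\Sigma_1$) coincide with the conditions cut out by $\alpha$ (resp.\ $\beta$), and since $\ell \notin S_2$ (resp.\ $\ell \notin S_1$) in the excluded case, the extra Selmer condition at $\ell$ imposes no obstruction to lying in $\ker \alpha$ (resp.\ $\ker \beta$). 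The inclusions $\Sel^{(\phi')}(\Q, E') \subset \ker \alpha$ and $\Sel^{(\phi)}(\Q, E) \subset \ker \beta$ then follow.
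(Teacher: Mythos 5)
Your proposal matches the paper's own argument: apply Lemma~\ref{LemmaGen} to each isogeny, use Lemma~\ref{LNotEll} for $p \neq \ell$ (and for $p = \ell$ when $\ell \mid c_\ell(E)c_\ell(E')$), and handle the four remaining subcases at $\ell$ via Lemmas \ref{LemmaEll1} and~\ref{Ell}, with the inclusions in the two excluded cases coming from $U_\ell = H_\ell$ (resp.\ $U'_\ell = H'_\ell$) and the fact that the dropped condition at $\ell$ is of codimension one. This is the same proof, just written out in more detail.
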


We see that in each case, we obtain an explicit description for at least one
of the two Selmer groups, which we can therefore determine fairly easily.
We repeat the observation that this is sufficient to obtain a bound on the
$\ell$-torsion in~$\Sha$, compare Lemma~\ref{combine}.

\begin{proof}
  We observe that in all relevant cases, the sets $S_1$ and~$S_2$
  correspond to those defined in Lemma~\ref{LemmaGen}. For primes $p \neq \ell$,
  this follows from Lemma~\ref{LNotEll}, which also covers the case $p = \ell$
  when $\ell \mid c_\ell(E) c_\ell(E')$. The remaining cases for $p = \ell$
  are dealt with in Lemmas \ref{LemmaEll1} and~\ref{Ell}. In the cases
  where we do not claim equality, we fail to take into account the local
  condition at~$\ell$ (which is of codimension~1 in~$U_\ell = H_\ell$ or
  $U'_\ell = H'_\ell$).
\end{proof}

This provides some easy bounds on the
Selmer groups. To make this more precise, we observe the following.
We denote the class number of a number field~$K$ by~$h_K$.

\begin{Lemma} \label{KSDim}
  Let $K/\Q$ be a Galois extension with Galois group a subgroup of~$\F_\ell^\times$.
  Let $S$ be a finite set of primes, and denote by $S' \subset S$ the subset
  of primes that are totally split in~$K$. If $\ell \nmid h_K$, then
  \[ \dim_{\F_\ell} K(S, \ell)^{(1)} = \#S' + 1 \]
  if $K$ is totally real but $K \neq \Q$, or $K = \Q(\mu_\ell)$ with the
  standard action on~$\mu_\ell$, and
  \[ \dim_{\F_\ell} K(S, \ell)^{(1)} = \#S' \]
  otherwise.
\end{Lemma}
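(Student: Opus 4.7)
The plan is to compute $\dim_{\F_\ell} K(S,\ell)^{(1)}$ by identifying $K(S,\ell)$ with $\CO_{K,S}^\times/(\CO_{K,S}^\times)^\ell$ as $\F_\ell[G]$-modules (with $G = \mathrm{Gal}(K/\Q) \subset \F_\ell^\times$), and then decomposing the latter into eigenspaces for the characters $\chi_1^k$ of $G$, where $\chi_1 : G \hookrightarrow \F_\ell^\times$ is the inclusion. The assumption $\ell \nmid h_K$, combined with the surjection $\Cl(\CO_K) \twoheadrightarrow \Cl(\CO_{K,S})$, gives $\Cl(\CO_{K,S})[\ell] = 0$, so the standard exact sequence
\[
0 \to \CO_{K,S}^\times/(\CO_{K,S}^\times)^\ell \to K(S,\ell) \to \Cl(\CO_{K,S})[\ell] \to 0
\]
collapses to an isomorphism. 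Since $|G|$ divides $\ell-1$, $\F_\ell[G]$ is semisimple with one-dimensional characters $\chi_1^k$, and extracting the $\chi_1$-eigenspace is an exact functor.

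Next, I split $\CO_{K,S}^\times \otimes \F_\ell$ into two pieces via the $S$-unit sequence
\[
0 \to \CO_K^\times \to \CO_{K,S}^\times \to M \to 0,
\]
where $M$ is the kernel of the natural map $\bigoplus_{\Fp \in S_K} \Z \to \Cl(\CO_K)$ and $S_K$ is the set of primes of $K$ above $S$. Since $\ell \nmid h_K$, the inclusion $M \hookrightarrow \bigoplus_{\Fp \in S_K} \Z$ has cokernel of order prime to $\ell$, so $M \otimes \F_\ell = \bigoplus_{p \in S} \F_\ell[G/D_p]$, the permutation $\F_\ell[G]$-module on the primes above $S$ ($D_p$ being a decomposition group at $p$). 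By Frobenius reciprocity, the $\chi_1$-eigenspace of $\F_\ell[G/D_p]$ is one-dimensional exactly when $\chi_1|_{D_p}$ is trivial; since $\chi_1$ is injective, this happens iff $D_p = 1$, i.e., iff $p$ splits completely in $K$. This contributes $\#S'$ to the total.

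For $\CO_K^\times \otimes \F_\ell$, I separate the torsion part $\mu(K) \otimes \F_\ell$ from the free part. By Dirichlet, the free part tensored with $\Q$ is the sum-zero subspace of the permutation $\Q[G]$-module on the Archimedean places of $K$, which is $\Q[G]$ in the totally real case and $\mathrm{Ind}_{\langle c \rangle}^G \Q$ in the CM case (with $c \in G$ complex conjugation). Frobenius reciprocity then yields a one-dimensional $\chi_1$-eigenspace iff $\chi_1 \neq 1$ and $\chi_1(c) = 1$; in the CM case $c$ is the unique element of order $2$ in the cyclic group $G \subset \F_\ell^\times$, hence $c = -1$ and $\chi_1(c) = -1 \neq 1$, so this contribution is $1$ when $K$ is totally real with $K \neq \Q$, and $0$ otherwise. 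The root-of-unity part $\mu(K) \otimes \F_\ell$ is nontrivial iff $\mu_\ell \subseteq K$, and since $[K:\Q]$ divides $\ell-1 = [\Q(\mu_\ell):\Q]$, this forces $K = \Q(\mu_\ell)$; under the standard action convention $G$ acts on $\mu_\ell$ by $\chi_1$ itself, contributing $1$. Summing these contributions yields the claimed formulas, noting that for $\ell > 2$ the field $\Q(\mu_\ell)$ is CM, so the two sources of a ``$+1$'' never overlap.

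The main obstacle is the character bookkeeping in the CM case: one must verify that complex conjugation maps to $-1 \in \F_\ell^\times$ under the embedding $G \hookrightarrow \F_\ell^\times$ (killing the $\chi_1$-component of the unit representation), and that the standard-action convention on $\Q(\mu_\ell)$ really identifies the cyclotomic character on $\mu_\ell$ with $\chi_1$. Both are mild but essential for the dichotomy in the lemma to come out cleanly.
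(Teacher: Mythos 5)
Your proof is correct and reaches the same conclusion as the paper's, but by a genuinely different route. Both arguments begin the same way: the $S$-unit sequence and the hypothesis $\ell \nmid h_K$ kill the $\Cl_S(K)[\ell]$ term and reduce everything to understanding $U_S$ as a module for $G = \operatorname{Gal}(K/\Q)$. At that point the paper works over $\Q$: it observes that $U_S \otimes \Q$ involves all $\varphi(k)$ characters of each fixed order $k \mid \#G$ with a common multiplicity $m_k$, writes down the linear system $\sum_{k \mid n} m_k\varphi(k) = \#S_{K_n} + \#\Sigma_{K_n} - 1$ over the subfields $K_n \subset K$, and solves it by linearity in the right-hand side (using $\#\{p\}_{K_n} = \gcd(n, d_p)$) to extract $m_{\#G}$ as a count of places with trivial decomposition group. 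You instead exploit the semisimplicity of $\F_\ell[G]$ (valid since $|G| \mid \ell-1$) to decompose $\CO_{K,S}^\times \otimes \F_\ell$ directly: the $S$-part becomes $\bigoplus_{p \in S} \F_\ell[G/D_p]$, the free part of $\CO_K^\times$ becomes (via Herbrand/Dirichlet) the augmentation subspace of the permutation module on archimedean places, and you read off the $\chi_1$-multiplicity of each piece by Frobenius reciprocity. The two computations are, at bottom, the same count; your module-theoretic route avoids the linear system, handles $K = \Q$ uniformly rather than as a separate case, and makes the role of complex conjugation and of the CM/totally-real dichotomy more transparent. One point you compress slightly is the transfer from the $\Q[G]$-structure of $\CO_K^\times \otimes \Q$ to the $\F_\ell[G]$-structure of the free part mod $\ell$; this is harmless because $\ell \nmid |G|$ makes reduction of $\F_\ell^\times$-valued characters from characteristic~$0$ well-behaved, but a careful write-up should say so. The paper's route has the small advantage of never leaving rational representations until the very last step, which keeps this reduction issue out of view.
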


\begin{proof}
  The case $K = \Q$ is clear. In general there is an exact sequence
  \[ 0 \To U_S/U_S^\ell \To K(S, \ell) \To \Cl_S(K)[\ell] \To 0 \]
  where $U_S$ is the group of $S$-units of~$K$ and $\Cl_S(K)$ is the
  $S$-class group. The assumption $\ell \nmid h_K$ implies that
  $\Cl_S(K)[\ell] = 0$. By the Dirichlet unit theorem, the group $U_S$
  has torsion-free rank $\#S_K + \#\Sigma_K - 1$, where $S_K$ is the
  set of places of~$K$ above primes in~$S$ and $\Sigma_K$ is the
  set of infinite places of~$K$.
  Let $G \subset \F_\ell^\times$ be the (cyclic) Galois group
  of~$K/\Q$. The representation of~$G$ on the $\Q$-vector space
  $U_S \otimes_{\Z} \Q$ must involve all characters of~$G$ of fixed
  order~$n \mid \#G$ with the same multiplicity~$m_n$. Let $K_n$ be
  the subfield of~$K$ of degree~$n$. Then
  \[ \sum_{k \mid n} m_k \varphi(k) = \#S_{K_n} + \#\Sigma_{K_n} - 1 \]
  for all $n \mid \#G$. We then have $\dim K(S, \ell)^{(1)} = m_{\#G}$
  (plus~1 if $\mu_\ell \subset K$ with the standard action).
  We solve this system of equations. Note that the right hand side can
  be written as a sum
  \begin{equation} \label{rhs}
    -1 + \#\Sigma_{K_n} + \sum_{p \in S} \#\{p\}_{K_n} \,.
  \end{equation}
  Since the left hand side is linear in the~$m_k$, the solution is obtained
  as a sum of solutions corresponding to the individual summands in~\eqref{rhs}.
  Let $d_p$ be the index of the decomposition group of~$p$ inside~$G$.
  Then we have $\#\{p\}_{K_n} = \gcd(n, d_p)$, and the solution for this
  right hand side is given by $m_k = 1$ if $k \mid d_p$ and $m_k = 0$ otherwise.
  An analogous statement holds for the contribution of~$\Sigma_{K_n}$,
  with $d_\infty$ the index of the subgroup generated by complex conjugation.
  The first summand~$-1$ in~\eqref{rhs} contributes $m_1 = -1$ and $m_k = 0$
  for $k > 1$. In total, we see that $m_{\#G}$ counts the number of places
  in~$S \cup \{\infty\}$ such that the decomposition group is trivial,
  which means that they split completely in~$K$. This gives the result when
  $K \neq \Q(\mu_\ell)$, since then the $S$-unit group has no $\ell$-torsion.
  For $K = \Q(\mu_\ell)$, we get an additional dimension from~$\mu_\ell$
  when $\F_\ell^\times$ acts on it in the standard way.
\end{proof}

This applies to our situation in the following way.

\begin{samepage}
\begin{Corollary} \label{simple}
  In the situation of Theorem~\ref{thm}, the following assertions hold.
  \begin{enumerate}
    \item If $\ell \nmid h_K$, then we have
          \[ \dim K(S_1, \ell)^{(1)} = \#\Sigma_1 + 1 - \dim E(\Q)[\phi] \,, \]
          and therefore (writing $\Sha[\ell]$ for either $\Sha(\Q, E)[\ell]$
          or $\Sha(\Q, E')[\ell]$)
          \[ r + \dim \Sha[\ell] \le \#\Sigma_1 + \#\Sigma_2
                                      + 2(1 - \dim E(\Q)[\phi]) - w \,.
          \]
    \item If $\ell \nmid h_{K'}$, then we have
          \[ \dim K'(S_2, \ell)^{(1)} = \#\Sigma_2 + \dim E(\Q)[\phi] \,, \]
          and therefore (writing $\Sha[\ell]$ for either $\Sha(\Q, E)[\ell]$
          or $\Sha(\Q, E')[\ell]$)
          \[ r + \dim \Sha[\ell] \le \#\Sigma_1 + \#\Sigma_2 + w \,. \]
  \end{enumerate}
\end{Corollary}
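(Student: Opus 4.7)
The plan is to combine Lemma~\ref{KSDim}, applied to the pairs $(K, S_1)$ and $(K', S_2)$, with the inclusions $\Sel^{(\phi')}(\Q, E') \subset K(S_1, \ell)^{(1)}$ and $\Sel^{(\phi)}(\Q, E) \subset K'(S_2, \ell)^{(1)}$ furnished by Theorem~\ref{thm}, and then feed the resulting dimension bounds into the chain of equalities in Lemma~\ref{combine}.

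First I would set up the relevant characters. Let $\chi, \chi'$ be the mod-$\ell$ characters describing the Galois action on $E[\phi]$ and $E'[\phi']$, so that $K$ and $K'$ are their fixed fields. The Weil pairing gives $\chi\chi' = \chi_{\mathrm{cyc}}$, where $\chi_{\mathrm{cyc}}$ is the mod-$\ell$ cyclotomic character. Our normalization $E[\phi] \subset E(\R)$ forces $\chi(c) = 1$, where $c$ is complex conjugation, so $K$ is totally real; consequently $\chi'(c) = -1$, so $\infty$ does not split in~$K'$. Moreover $K = \Q$ iff $\chi$ is trivial iff $E(\Q)[\phi] \neq 0$, and in exactly that case $\chi' = \chi_{\mathrm{cyc}}$, so $K' = \Q(\mu_\ell)$ carries the standard action.

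Next I would count the primes in $S_1$ that split completely in $K$, and those in $S_2$ that split completely in $K'$. For $p \in \Sigma_1$ the Tate analysis of Section~\ref{Tate} shows that $E[\phi]$ is generated at~$p$ by some $q_\ell \in \Q_p^\times$, so $E[\phi] \subset E(\Q_p)$ and $p$ splits completely in $K$; by the symmetric Tate computation applied to~$E'$, every $p \in \Sigma_2$ splits completely in $K'$. The conditions under which $\ell$ is adjoined to $S_1$ (resp.\ $S_2$) in Theorem~\ref{thm} explicitly require $E(\Q_\ell)[\phi] = 0$ (resp.\ $E'(\Q_\ell)[\phi'] = 0$), so $\ell$ does not split completely in $K$ (resp.\ $K'$) in those cases. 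Hence $\#S_1' = \#\Sigma_1$ and $\#S_2' = \#\Sigma_2$ in the notation of Lemma~\ref{KSDim}.

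Applying Lemma~\ref{KSDim} with the hypotheses $\ell \nmid h_K$ and $\ell \nmid h_{K'}$ now yields
\[ \dim K(S_1,\ell)^{(1)} = \#\Sigma_1 + 1 - \dim E(\Q)[\phi] \qquad\text{and}\qquad \dim K'(S_2,\ell)^{(1)} = \#\Sigma_2 + \dim E(\Q)[\phi], \]
the $+1$ in the first identity coming from the unit contribution when $K$ is totally real of degree $>1$ (and vanishing when $K=\Q$), and the $+\dim E(\Q)[\phi]$ in the second coming from the $\mu_\ell$-bonus precisely when $K' = \Q(\mu_\ell)$ with the standard action, i.e.\ when $E(\Q)[\phi] \neq 0$. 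Substituting each of these dimension estimates into the final two expressions of Lemma~\ref{combine} and simplifying produces the two stated bounds for $r + \dim \Sha[\ell]$. The main (mild) obstacle is aligning the abstract character-theoretic splitting behavior of $K$ and $K'$ with the concrete Tate-curve data at the primes of $\Sigma_1 \cup \Sigma_2$; once that matching is made explicit, everything else is routine bookkeeping.
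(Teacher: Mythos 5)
Your proof is correct and follows essentially the same route as the paper: identify $S_1' = \Sigma_1$ and $S_2' = \Sigma_2$ via the Tate-curve analysis and the $\ell$-adjoining conditions of Theorem~\ref{thm}, apply Lemma~\ref{KSDim}, and substitute the resulting Selmer-group bounds into Lemma~\ref{combine}. Your character-theoretic observation that $K' = \Q(\mu_\ell)$ with the standard action precisely when $E(\Q)[\phi] \neq 0$ is a useful explicit unpacking of a point the paper leaves implicit in part~(2), where it merely notes that $K'$ is not totally real before quoting the dimension formula.
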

\end{samepage}

\begin{proof} \strut
  \begin{enumerate}
    \item First note that for all $p \in \Sigma_1$, $p$ is totally split
          in~$K$ (since by the discussion in Section~\ref{Tate}, we have
          $E(\Q_p)[\phi] \neq 0$). If $S_1 \neq \Sigma_1$, then
          $E(\Q_\ell)[\phi] = 0$, and so the additional element~$\ell$
          of~$S_1$ does not split completely. In the notation of Lemma~\ref{KSDim},
          we thus have $S'_1 = \Sigma_1$. Also, $K$ is totally real.
          The claim on the dimension of~$K(S_1, \ell)^{(1)}$ follows.
          By Theorem~\ref{thm}, we have
          \[ \Sel^{(\phi')}(\Q, E') \subset \ker \alpha \subset K(S_1, \ell)^{(1)} \,,\]
          so $\dim \Sel^{(\phi')}(\Q, E') \le \#\Sigma_1 + 1 - \dim E(\Q)[\phi]$.
          Lemma~\ref{combine} now gives the estimate on $r + \dim \Sha[\ell]$.
    \item In the same way as in part~(1), we find that $S'_2 = \Sigma_2$.
          Now $K'$ is not totally real, so Lemma~\ref{KSDim} gives the
          dimension of~$K'(S_2, \ell)^{(1)}$ as stated. The estimate
          then follows using Lemma~\ref{combine} as in part~(1). \qedhere
  \end{enumerate}
\end{proof}

\begin{Example} \label{kronecker-weber}
  Let $\ell \ge 11$ be a prime, and let $\phi : E \to E'$ be an $\ell$-isogeny
  of elliptic curves of conductor~$\ell^2$ (such that $E[\phi] \subset E(\R)$
  as usual). By work of Mazur~\cite{Mazur} we have $\ell \le 163$, and we find
  that in fact this applies to exactly the following curves~$E$ (with
  $\ell = 11, 19, 43, 67, 163$):
  \begin{gather*}
    \text{121a2, 121b1, 121c2, \quad 361a1, \quad 1849a1, \quad 4489a1
           \quad and \quad 26569a1} \,.
  \end{gather*}
  (We use the labeling of the Cremona database~\cite{CremonaDB}.)

  By \cite[Proposition VII.4.1]{Silverman}, the points in $E[\phi]$
  and~$E'[\phi']$ are defined over an abelian extension of~$\Q$ of degree
  dividing~$\ell-1$ and only ramified at~$\ell$. By the Kronecker-Weber theorem
  \cite[Theorem V.1.10]{Neukirch}, all such number fields are contained
  in~$\Q(\mu_\ell)$. The field $K = \Q(P)$ for any
  $P \in E[\phi] \setminus \{0\}$ is totally real, hence contained in the maximal
  totally real subfield~$\Q(\mu_\ell)^+$ of~$\Q(\mu_\ell)$. By \cite{VandVer},
  for example, it is known that $\ell$ does not divide the class number
  of~$\Q(\mu_\ell)^+$. Since $[\Q(\mu_\ell)^+ : K]$ divides $\ell-1$ and hence
  is coprime to~$\ell$, this implies that $\ell \nmid h_K$. If $\ell$ is a
  regular prime, \emph{i.e.,}~$\ell \neq 67$, then $\ell \nmid h_{K'}$ as well.

  Since $K$ is totally ramified at~$\ell$, we have that
  $[\Q_\ell(P) : \Q_\ell] = [K : \Q]$. Again by work of Mazur, $E$ has
  no rational $\ell$-torsion, implying that $K \neq \Q$. This implies
  $P \notin E(\Q_\ell)$, so $E(\Q_\ell)[\phi] = 0$. In the same way,
  we see that $E'(\Q_\ell)[\phi'] = 0$.

  Since $E$ has additive reduction at~$\ell$ and good reduction everywhere
  else, we find $\Sigma_1 = \Sigma_2 = \emptyset$. If $\ell = 67$, we verify
  that $w = 1$. Corollary~\ref{simple} now shows that
  \[ r + \dim_{\F_\ell} \Sha(\Q, E)[\ell] \le w \,. \]
  We then verify that the rank is equal to~$w$, and we see that
  $$\Sha(\Q, E)[\ell] = \Sha(\Q, E')[\ell] = 0\,.$$
\end{Example}

\begin{Example}
  Consider $E = \text{294a1}$ with $7$-isogenous curve $E' = \text{294a2}$.
  We find $\Sigma_1 = \emptyset$, $\Sigma_2 = \{2\}$, $w = 1$, and the rank $r = 0$.
  We have $K = \Q(\mu_7)^+$ (the maximal real subfield of~$\Q(\mu_7)$)
  and $K' = \Q(\sqrt{-7})$. Note that Corollary~\ref{simple} gives a bound
  of~$2$ for the dimension
  of $\Sha(\Q, E)[7]$. So we need to look more carefully to prove that there
  is no 7-torsion in~$\Sha(\Q, E)$. According to Theorem~\ref{thm},
  \[ \Sel^{(\phi)}(\Q, E)
      = \ker\bigl(\beta : K'(\{2\},7)^{(1)} \to K'(\{7\},7)^*\bigr) \,.
  \]
  The group $K'(\{2\}, 7)$ is generated by (the classes of) $1 + \sqrt{-7}$
  and $1 - \sqrt{-7}$; the two are swapped by the nontrivial automorphism.
  Therefore $K'(\{2\}, 7)^{(1)}$ is generated by their quotient. We check
  that $\frac{1 + \sqrt{-7}}{1 - \sqrt{-7}}$ is not a seventh power
  in~$\Q_7(\sqrt{-7})$. This implies that $\beta$ is injective, hence
  $\Sel^{(\phi)}(\Q, E) = 0$. Lemma~\ref{combine} then gives the bound
  \[ \dim \Sha(\Q, E)[7] \le 0 + 0 - 1 + 1 = 0 \,. \]
\end{Example}


\section{When there is rational $\ell$-torsion}

For the sake of completeness, we now consider the case that $E(\Q)[\phi] \neq 0$.
From Theorem~\ref{thm} and Lemma~\ref{Cassels}, we obtain the following,
which essentially reproduces Theorem~1 of Tom Fisher's thesis~\cite{TFisher}
(note that $\ell = 5$ or~$7$ if $E(\Q)[\phi] \neq 0$). Our version has
the slight advantage that it expresses the relevant data directly in terms of
the curves in question, whereas Fisher uses the parameter $\lambda \in X_1(\ell)$
corresponding to a generator of~$E[\phi]$.

\begin{Corollary} \label{pt}
  Assume that we have a nontrivial point $P$ in $E(\Q)[\phi]$.
  Let $S_1 = \Sigma_1$, and set $S_2 = \Sigma_2$ if $w = 1$ or $\ell \in \Sigma_1$,
  $S_2 = \Sigma_2 \cup \{\ell\}$ otherwise. Let
  \[ \alpha : \Q(S_1, \ell) \To \Q(S_2, \ell)^* \]
  be the canonical map.
  Then $\Sel^{(\phi')}(\Q, E') = \ker \alpha$, and
  \[ r + \dim_{\F_\ell} \Sha(\Q, E)[\ell]
     \le 2 \dim_{\F_\ell} \ker \alpha - \#\Sigma_1 + \#\Sigma_2 - w
     \le \#\Sigma_1 + \#\Sigma_2 - w \,.
  \]
\end{Corollary}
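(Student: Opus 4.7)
The approach is to specialize Theorem~\ref{thm} to the hypothesis $E(\Q)[\phi] \neq 0$ and combine it with Lemma~\ref{combine}. A rational point in $E[\phi]$ forces the splitting field $K$ to equal $\Q$, so $K(S,\ell)^{(1)} = \Q(S,\ell)$. The Weil pairing then identifies $E'[\phi'] \cong \mu_\ell$ as Galois modules, and since $\mu_\ell \not\subset \Q_\ell$ we get $E'(\Q_\ell)[\phi'] = 0$ for free.

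I would then verify that the sets $S_1$, $S_2$ of Theorem~\ref{thm} agree with those in the corollary. The condition for enlarging $S_1$ to $\Sigma_1 \cup \{\ell\}$ requires $E(\Q_\ell)[\phi] = 0$, which fails, so $S_1 = \Sigma_1$. The condition for enlarging $S_2$ reduces to $\ell \nmid c_\ell(E)c_\ell(E')$ together with $w = 0$ (the third hypothesis $E'(\Q_\ell)[\phi'] = 0$ being automatic). Using Section~\ref{Tate}, I would rule out $\ell \in \Sigma_2$: this would place us in the $\mu_\ell$-type Tate case at~$\ell$, forcing $E(\Q_\ell)[\phi] = \mu_\ell(\Q_\ell) = 0$, contradicting the hypothesis. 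Since Section~\ref{Tate} also shows that $\ell \mid c_\ell(E) c_\ell(E')$ forces $\ell \in \Sigma_1 \cup \Sigma_2$, the condition $\ell \nmid c_\ell(E) c_\ell(E')$ reduces to $\ell \notin \Sigma_1$, matching the corollary's dichotomy. Moreover the exceptional clause in Theorem~\ref{thm} for the identity $\Sel^{(\phi')}(\Q,E') = \ker\alpha$ requires $E'(\Q_\ell)[\phi'] \neq 0$ and hence never triggers, so this equality holds unconditionally in our setting.

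The first displayed inequality then follows from Lemma~\ref{combine} after substituting $\dim \Sel^{(\phi')}(\Q,E') = \dim \ker \alpha$. For the second, I would use $\ker\alpha \subset \Q(\Sigma_1,\ell)$ and the fact that $\dim_{\F_\ell} \Q(S,\ell) = \#S$ for any finite set $S$: the class group of $\Q$ is trivial, and $-1 = (-1)^\ell$ is an $\ell$th power for odd $\ell$, so the valuations give an isomorphism $\Q(S,\ell) \cong \F_\ell^{\#S}$. Thus $\dim \ker\alpha \le \#\Sigma_1$, and the arithmetic $2\#\Sigma_1 - \#\Sigma_1 + \#\Sigma_2 - w = \#\Sigma_1 + \#\Sigma_2 - w$ delivers the final bound. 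The only mildly delicate step is the exclusion of $\ell$ from $\Sigma_2$, which rests on the incompatibility between a rational kernel of~$\phi$ and a $\mu_\ell$-type Tate kernel at~$\ell$; all other steps are direct unpacking of definitions or routine arithmetic.
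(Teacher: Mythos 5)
Your proof is correct and takes essentially the same route as the paper's (terser) proof: specialize Theorem~\ref{thm} using $K = \Q$, note that $E(\Q)[\phi] \neq 0$ forces $E(\Q_\ell)[\phi] \neq 0$ and hence $E'(\Q_\ell)[\phi'] = 0$, rule out $\ell \in \Sigma_2$ via the $\mu_\ell$-type Tate description, and then apply Lemma~\ref{combine} together with $\dim_{\F_\ell}\Q(S,\ell) = \#S$. Your careful verification that the conditions defining $S_1$, $S_2$ in the corollary match those in Theorem~\ref{thm} (in particular that $\ell \nmid c_\ell(E)c_\ell(E')$ is equivalent to $\ell \notin \Sigma_1$ once $\ell \notin \Sigma_2$ is established) fills in details the paper leaves implicit.
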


\begin{proof}
  The result on the Selmer group follows from Theorem~\ref{thm}.
  (Note that $E(\Q)[\phi] \neq 0$ implies $E(\Q_\ell)[\phi] \neq 0$
  and therefore $E'(\Q_\ell)[\phi'] = 0$. Recall that for $p \in \Sigma_2$,
  we have $E[\phi] \cong \mu_\ell$ as $\Q_p$-Galois modules. Since
  in the present situation $E[\phi]$ consists of rational points, this
  is only possible if $p \equiv 1 \bmod \ell$. In particular, $\ell \notin \Sigma_2$.)
  The general bound of Lemma~\ref{combine} and the trivial fact
  that $\dim \Q(S, \ell) = \#S$ then give the estimates.
\end{proof}

Note that for $p \in \Sigma_2$ (which implies that $p \equiv 1 \bmod \ell$),
\[ \Z_p^\times / (\Z_p^\times)^\ell \cong \mu_\ell(\F_p) \cong \Z/\ell\Z \,, \]
where the first isomorphism is given by $x \mapsto x^{(p-1)/\ell} \bmod p$.
The second isomorphism depends on the choice of a generator of $\mu_\ell(\F_p)$.
Note also that
\[ \Z_\ell^\times / (\Z_\ell^\times)^\ell \cong \Z/\ell\Z \]
via $x \mapsto (x^{\ell-1}-1)/\ell \bmod \ell$.

The following examples can also be found in Tables 3 and~4 in the appendix
of~\cite{TFisher}. They are given here to illustrate the simplicity of the
computations.

\begin{Example}
  Consider curve $E = \text{50b1}$, with $5$-isogenous curve 50b3.
  Note that $E(\Q)[5] \neq 0$. We have
  $\Sigma_1 = \{2\}$, $\Sigma_2 = \emptyset$, and $w = 1$. By
  Corollary~\ref{pt}, we have $\dim \Sel^{(\phi')}(\Q, E') = 1$.
  The rank is zero, so we find the bound
  \[ \dim \Sha(\Q, \text{50b1})[5] \le 2 - 1 + 0 - 1 = 0 \,. \]
\end{Example}

\begin{Example}
  For curve $E = \text{174b1}$, which is $7$-isogenous to 174b2, we have
  $E(\Q)[7] \neq 0$ and find
  $\Sigma_1 = \{2, 3\}$, $\Sigma_2 = \{29\}$; also $w = 1$. The group $\mu_7(\F_{29})$
  is generated by~$16$; we have
  \[ 16^0 = 1,\; 16^1 = 16,\; 16^2 = 24,\; 16^3 = 7,\; 16^4 = 25,\;
     16^5 = 23,\; 16^6 = 20 \,.
  \]
  If we identify $\Z_{29}^\times/(\Z_{29}^\times)^7 \cong \mu_7(\F_{29})$
  with $\Z/7\Z$ by sending $16$ to~$1$, then
  $2$ is mapped to~$1$, and $3$ is mapped to~$5$. So $\alpha$ is
  surjective and $\dim \ker \alpha = 1$.
  We obtain the bound (the rank is again zero)
  \[ \dim \Sha(\Q, \text{174b1})[7] \le 2 - 2 + 1 - 1 = 0 \,. \]
\end{Example}

\begin{Example}
  We now consider $E = \text{294b2}$ with $E(\Q)[7] \neq 0$ and
  $7$-isogenous curve~294b1. We find $\Sigma_1 = \{2,3\}$, $\Sigma_2 = \emptyset$,
  and $w = 0$. By Corollary~\ref{pt}, we have
  \[ \Sel^{(\phi')}(\Q, E')
        = \ker\bigl(\Q(\{2,3\}, 7) \to \Z_7^\times/(\Z_7^\times)^7
                                   \cong \Z/7\Z\bigr) \,.
  \]
  The map is given by $a \mapsto (a^6-1)/7$ (mod 7). Since both $2$ and~$3$
  have nontrivial image, we have $\dim \Sel^{(\phi')}(\Q, E') = 1$, and
  (the rank is zero)
  \[ \dim \Sha(\Q, \text{294b2})[7] \le 2 - 2 + 0 - 0 = 0 \,. \]
\end{Example}


\section{Application to the Birch and Swinnerton-Dyer Conjecture}

This work was motivated by a long-running project led by William Stein
(and in whose inception the second author of this paper was involved)
aiming at verifying the Birch and Swinnerton-Dyer conjecture completely
for all elliptic curves of analytic rank 0 or~1 and of moderate conductor.
The conjecture states that the analytic rank of an elliptic curve $E/\Q$
(which is the order of vanishing of the $L$-series $L(E,s)$ at $s=1$)
equals the Mordell-Weil rank~$r$ of~$E$, and that the following formula
for the leading term in the Taylor expansion at~$s=1$ holds:
\[ \frac{1}{r!} L^{(r)}(E,1)
     = \frac{\Omega(E) \Reg(E) \prod_p c_p(E) \, \#\Sha(\Q,E)}{(\#E(\Q)_\tors)^2} .
\]
Here $\Omega(E)$ is the real period of~$E$, $\Reg(E)$ is the regulator of
the height pairing on~$E(\Q)$, $c_p(E)$ are the Tamagawa numbers, and
$E(\Q)_\tors$ is the torsion subgroup of~$E(\Q)$. We call
\[ \#\Sha(\Q,E)_\an
    = \frac{1}{r!} L^{(r)}(E,1)
        \cdot \frac{(\#E(\Q)_\tors)^2}{\Omega(E) \Reg(E) \prod_p c_p(E)}
\]
the \emph{analytic order of $\Sha(\Q,E)$}. Then, assuming the rank conjecture
\[ \ord_{s=1} L(E,s) = r , \]
the second part of the conjecture asserts that $\#\Sha(\Q, E) = \#\Sha(\Q, E)_\an$.

Recall that Kolyvagin's work~\cite{Kolyvagin} proves that if~$E$ is a
modular elliptic curve over~$\Q$ and if the analytic rank of~$E$ is~0 or~1,
then the analytic rank equals the algebraic rank, $\Sha(\Q, E)$ is finite,
and its order is bounded above at all but a finite number of primes by
an explicitly computable quantity. By \cite{modularity1} and \cite{modularity2},
the modularity hypothesis may be removed from these results. It is also
known that $\#\Sha(\Q, E)_\an$ is a rational number in this case.
For such curves~$E$ and arbitrary primes~$\ell$, it then makes sense to define
\[ \BSD(E, \ell) \iff \ord_\ell \#\Sha(\Q, E) = \ord_\ell \#\Sha(\Q, E)_\an . \]
The full BSD conjecture for~$E$ is then equivalent to $\BSD(E, \ell)$ for
all primes~$\ell$.

The work in~\cite{RMiller}, which makes use of various previous results
by a number of people (see~\cite{RMiller} for references), combines
Kolyvagin's result with explicit calculations involving Heegner points,
Iwasawa theory and explicit descents to show that for such curves of
conductor less than~5000, if the mod-$\ell$ Galois representation coming from~$E$
is irreducible, then $\BSD(E, \ell)$ holds.

In the reducible case there is an $\ell$-isogeny $E \to E'$, and we have
the following result.

\begin{Theorem} \label{BSD}
  Let $\ell$ be a prime (not necessarily $\ge 5$),
  let $\phi : E \to E'$ be an isogeny of degree~$\ell$ of elliptic curves
  over~$\Q$, and assume that the analytic rank of $E$ is 0 or~1 and the
  conductor of~$E$ is less than~5000. If the $\ell$-primary parts
  of~$\Sha(\Q, E)$ and~$\Sha(\Q, E')$ are predicted by the Birch and
  Swinnerton-Dyer conjecture to be trivial, then they are indeed trivial,
  and so $\BSD(E, \ell)$ holds.
\end{Theorem}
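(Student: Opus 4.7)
The plan is to combine Kolyvagin's theorem with the isogeny descent machinery developed in this paper. By modularity~\cite{modularity1, modularity2} together with~\cite{Kolyvagin}, the hypothesis that the analytic rank of $E$ is $0$ or~$1$ forces the Mordell--Weil rank $r$ of $E(\Q)$ to equal the analytic rank and $\Sha(\Q, E)$ to be finite; the same conclusions apply to the isogenous curve $E'$, which has the same $L$-function. Since $\Sha(\Q, E)[\ell^\infty]$ is a finite $\ell$-group, it vanishes if and only if $\Sha(\Q, E)[\ell] = 0$, so it suffices to establish triviality of the $\ell$-torsion of~$\Sha$ for both $E$ and $E'$.

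For $\ell \in \{2, 3\}$, Theorem~\ref{thm} is not available (or requires extra hypotheses at $\ell = 3$), and we instead invoke full $\ell$-descent as implemented in {\sf MAGMA} and {\sf Sage} to compute $\Sel^{(\ell)}(\Q, E)$ and $\Sel^{(\ell)}(\Q, E')$ directly. For $\ell \ge 5$, we apply Theorem~\ref{thm}: we determine $\Sigma_1$, $\Sigma_2$ and $w$ from the Tamagawa numbers and real periods (the latter via Lemma~\ref{Lemw}), describe the finite $\F_\ell$-vector spaces $K(S_1, \ell)^{(1)}$ and $K'(S_2, \ell)^{(1)}$ explicitly using Lemma~\ref{KSDim} and Corollary~\ref{simple} where possible, and compute at least one of the kernels $\ker\alpha$ or $\ker\beta$. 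Cassels' formula (Lemma~\ref{Cassels}) then pins down the size of the other Selmer group.

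With both Selmer dimensions in hand, Lemma~\ref{combine} yields the upper bound
\[ \dim_{\F_\ell}\Sha(\Q, E)[\ell]
     \le \dim_{\F_\ell}\Sel^{(\phi')}(\Q, E') + \dim_{\F_\ell}\Sel^{(\phi)}(\Q, E)
         - \dim_{\F_\ell} E(\Q)[\phi] - r, \]
and the same bound controls $\dim_{\F_\ell}\Sha(\Q, E')[\ell]$. The proof is then a finite case check: enumerate all isogeny pairs $(E, E')$ of conductor below~$5000$ with analytic rank in $\{0, 1\}$ and a rational $\ell$-isogeny, use the BSD formula to identify those pairs for which the predicted $\ell$-parts of~$\Sha$ are trivial, and verify in each such case that the above bound evaluates to~$0$.

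The main obstacle is handling the two excluded cases of Theorem~\ref{thm}, in which only the inclusion $\Sel^{(\phi')}(\Q, E') \subset \ker\alpha$ (respectively $\Sel^{(\phi)}(\Q, E) \subset \ker\beta$) is available; the computed kernel may then overestimate the Selmer group by up to one $\F_\ell$-dimension. When this slack is incompatible with the BSD prediction, we must either evaluate the descent map $F$ at a point of $E(\Q_\ell)$ of good reduction to recover the missing local condition at~$\ell$, or rely on the complementary computation of $\ker\beta$ together with Cassels' formula. A secondary technical nuisance is that Corollary~\ref{simple} requires $\ell \nmid h_K$ (respectively $\ell \nmid h_{K'}$); in the rare exceptional cases we work with $K(S, \ell)$ directly, using explicit generators of the $S$-unit and ideal class groups. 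Both issues remain tractable computations within the given conductor range.
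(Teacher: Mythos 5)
Your overall strategy---reduce to a finite list and verify that the isogeny-descent upper bound on $\dim\Sha[\ell]$ evaluates to $0$ in each case---is the same kind of argument the paper makes, and your remarks on the $\ell\in\{2,3\}$ cases, on Cassels' formula, and on the shape of the bound from Lemma~\ref{combine} are all sensible. But your proposal omits the single most important input that makes the paper's proof short enough to write down: the results collected in~\cite{RMiller} together with \cite{TFisher,TFisherPap} already establish the claim for all but nineteen isogeny classes, and among those nineteen only eight satisfy the hypothesis that BSD predicts triviality of both $\ell$-primary parts. The other eleven (BSD predicts $\Sha(\Q,E')(\ell)\cong(\Z/\ell\Z)^2$) are excluded by hypothesis. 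Without this reduction you are proposing to re-verify hundreds of pairs from scratch, which is logically admissible but is not what the paper does, and more to the point you never actually carry out the verification---you merely describe a procedure and assert that it would terminate with bound~$0$. Since the theorem \emph{is} the assertion that the computation succeeds, describing the procedure is not a proof; one must either cite prior work that performed it or perform it.

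For the eight cases that remain after that reduction, the paper disposes of them with almost no computation: four ($\ell=11,19,43,67$ with conductor $\ell^2$) are exactly Example~\ref{kronecker-weber}, and the other four ($(441\text{d}1,7)$, $(784\text{h}1,7)$, $(3025\text{a}1,11)$, $(3136\text{r}1,7)$) all have $\Sigma_1=\Sigma_2=\emptyset$, $w=1$, $\ell\nmid h_K$, $r=1$, so Corollary~\ref{simple}(2) immediately gives $r+\dim\Sha[\ell]\le 1$, hence $\Sha[\ell]=0$. The technical contingencies you spend your last paragraph on---the two excluded cases of Theorem~\ref{thm} where only an inclusion into $\ker\alpha$ or $\ker\beta$ is available, and the possibility $\ell\mid h_K$---do not in fact arise in any of these eight cases; they are genuine worries for a fully general from-scratch verification, but by leaning on the prior literature the paper sidesteps them entirely.

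<br>

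The most interesting point of contact with the paper that you should add: Example~\ref{kronecker-weber} is doing real work here (a Kronecker--Weber argument places $K$ inside $\Q(\mu_\ell)^+$, and then $\ell\nmid h_{\Q(\mu_\ell)^+}$ by \cite{VandVer} gives $\ell\nmid h_K$ with essentially no computation). Your proposal treats $\ell\nmid h_K$ as something to check ad hoc; the paper shows it can be deduced structurally for the conductor-$\ell^2$ cases. Incorporating that observation, and explicitly invoking the reduction from \cite{RMiller} and \cite{TFisher,TFisherPap}, would turn your outline into the paper's actual proof.
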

\begin{proof}
  General results collected in~\cite{RMiller}, together with results of
  \cite{TFisher,TFisherPap} show that the claim holds for all but nineteen
  isogeny classes containing a curve~$E$ such that $E$ has analytic rank 0 or~1
  and such that there is an $\ell$-isogeny $E \to E'$.
  For eight of the remaining classes, the predicted order of both
  Shafarevich-Tate groups is trivial at~$\ell$. They are represented by the
  following pairs $(E, \ell)$ with $E[\phi] \subset E(\R)$:
  \begin{gather*}
     (\text{121b1}, 11), \quad (\text{361a1}, 19), \quad
     (\text{441d1}, 7), \quad (\text{784h1}, 7), \quad (\text{1849a1}, 43), \\
     (\text{3025a1}, 11), \quad (\text{3136r1}, 7), \quad\text{and}\quad
     (\text{4489a1}, 67).
  \end{gather*}
  For the other eleven cases, more
  is needed than the techniques discussed here, see the discussion below.

  Four of the eight cases above can be found in Example~\ref{kronecker-weber},
  which shows that $\Sha(\Q, E)[\ell]$ and $\Sha(\Q, E')[\ell]$ are trivial
  in each case. This leaves
  the four pairs $(\text{441d1}, 7),$ $(\text{784h1}, 7),$
  $(\text{3025a1}, 11)$ and $(\text{3136r1}, 7).$ For all four curves
  we have that $\Sigma_1 = \Sigma_2 = \emptyset$ and $w = 1$. One can verify that
  $\ell \nmid h_K$ and $r = 1$ in each case and so by Corollary~\ref{simple},
  we find that $\Sha(\Q, E)[\ell] = \Sha(\Q, E')[\ell] = 0$.
\end{proof}

The remaining cases where the analytic rank of $E$ is at most~1 and the
conductor is at most~5000 are
\[ E \in \{\text{570l1},\, \text{870i1},\, \text{1050o1},\,\text{1938j1},\,
           \text{1950y1},\, \text{2370m1},\, \text{2550be1},\, \text{3270h1}\}
\]
for $\ell = 5$ and $E \in \{\text{546f1},\, \text{858k1},\, \text{1230k1}\}$ for
$\ell = 7$. In these cases the Birch and Swinnerton-Dyer conjecture implies
that
\[ \Sha(\Q, E)(\ell) = 0\quad \text{but}\quad \Sha(\Q, E')(\ell) = (\Z/\ell\Z)^2\,.\]
In this situation an $\ell$-isogeny descent shows that
$\Sha(\Q, E')[\ell] = (\Z/\ell\Z)^2$,
as found in~\cite{TFisher} or using the above methods. However this shows neither that
\[ \Sha(\Q, E)[\ell] = 0 \quad \text{ nor that }\quad
   \Sha(\Q, E')(\ell) = \Sha(\Q, E')[\ell]\,.
\]
These cases require a second descent over $\phi' : E' \rightarrow E$, a full
$\ell$-descent on~$E$, or at the very least one must show that the elements
of order~$\ell$ in $\Sha(\Q, E')$ are not divisible by~$\ell$.
In a forthcoming paper of the first author with Brendan Creutz~\cite{CrMi},
these remaining cases are dealt with using a second descent over~$\phi'$.



\end{document}